\newtheorem{thm}{Theorem}[section]
\newtheorem{lem}[thm]{Lemma}
\newtheorem{cor}[thm]{Corollary}
\theoremstyle{definition}
\newtheorem{defn}[thm]{Definition}
\newtheorem{remark}[thm]{Remark}
\newtheorem{claim}[thm]{Claim}
\newtheorem{ques}{Question}
\newcommand{\ov}{\overline}
\newcommand{\cay}{\mathrm{Cay}(G,  S)}
\newcommand{\pres}{\ensuremath{\langle S| R\rangle} }
\newcommand{\presu}{\ensuremath{\langle S\cup S'| R\cup R'\rangle} }
\newcommand*\mc[1]{\mathcal{#1}}
\newcommand{\N}{\mathbb{N}}
\newcommand{\R}{\mathbb{R}}
\DeclarePairedDelimiter\abs{\lvert}{\rvert}%
\newcommand{\diam}{\textup{\textsf{diam}}}
\newcommand{\ol}{\overline}
\newcounter{acomments}
\newcounter{ccomments}
\newcounter{dcomments}
\newcounter{tcomments}
\title{A non-loxodromic Morse element in a Morse local-to-global group}
\author{Carolyn Abbott and Stefanie Zbinden}
\date{}
\begin{document}

\maketitle

\begin{abstract}
    We use small-cancellation techniques to construct a Morse local-to-global group $G$ with an infinite-order Morse element that is not loxodromic in any action of $G$ on a hyperbolic space. In particular, the element cannot be WPD.
\end{abstract}

\section{Introduction}

Introduced by Gromov \cite{Gromov-hyp}, hyperbolicity is the most prominent notion of negative curvature in geometric group theory and has strong algebraic and algorithmic consequences \cite{Gromov-hyp, Paulin, DahGuirardel, Sela, ECHLPT}. Many important groups admit some aspects of negative curvature but are not hyperbolic, including free products of groups, mapping class groups, fundamental groups of many 3--manifolds, certain Artin groups, and the Cremona group. This observation led to the study of various generalizations of hyperbolic groups, such as relatively hyperbolic groups \cite{Farb:rel_hyp, Osin:rel_hyp, Bow:rel-hyp}, acylindrically hyperbolic groups \cite{O:acylindrical, DGO:rotating_families}, and Morse local-to-global (MLTG) groups \cite{russellsprianotran:thelocal}. For any of these generalizations, it is natural to ask which aspects of negative curvature they satisfy.  

In this paper we focus on MLTG groups. Introduced in \cite{russellsprianotran:thelocal}, a main feature of MLTG groups is that they eliminate pathological behavior of Morse geodesics. For example, if a not virtually cyclic MLTG group has a Morse geodesic, it has a Morse element, and if it has a Morse element, it has a subgroup isomorphic to $\mathbb F_2$. This is not true for general groups \cite{fink2017morse, OlshankiiOsinSapir:lacunary}. It is therefore natural to ask whether the elimination of pathological behavior is sufficient to ensure acylindrical hyperbolicity.

\begin{ques}[{\cite[Question~5]{russellsprianotran:thelocal}}]\label{ques:mltg-implies-acy}
    Are non-virtually cyclic MLTG groups with a Morse element acylindrically hyperbolic?
\end{ques}

A standard way to show a group $G$ is acylindrically hyperbolic is to find an action of $G$ on a hyperbolic space and an element $g\in G$ that is a loxodromic \textit{WPD} element in this action \cite[Theorem~1.4]{O:acylindrical}. Defined in \cite{BestFuji:WPD}, the WPD property is a form of properness of the action along the axis of the loxodromic isometry, and WPD elements are necessarily Morse elements of the group \cite{S:hypemb}.  In particular, one could hope that acylindrical hyperbolicity of MLTG groups follows from the fact that Morse elements in MLTG groups are nice enough to be WPD.  Our main result shows that this is not the case.

\begin{thm}\label{thm:mainthm}
    There exists a Morse local-to-global group $G$ and an infinite-order Morse element $a\in G$ such that $a$ is not loxodromic in any action of $G$ on a hyperbolic space. In particular, the element $a$ is not a loxodromic WPD element in any such action.
\end{thm}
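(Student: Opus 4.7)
The plan is to realize $G$ as a small-cancellation quotient of a free group, with the relators engineered to kill every loxodromic realization of $a$ while leaving the Morse property of $a$ and the Morse local-to-global property of $G$ intact. Concretely, I would start from $F = \langle a, b\rangle$ and a family $\mathcal{R}$ of long cyclic words satisfying a strong (classical or graphical) small-cancellation condition, and set $G = F / \langle\!\langle \mathcal{R}\rangle\!\rangle$. The relators would be designed so that each $r \in \mathcal{R}$ intersects the bi-infinite axis $\{a^n\}$ only in short, controlled subwords. Standard diagrammatic small-cancellation arguments should then give that $a$ has infinite order in $G$, and that any candidate quasi-geodesic in $\Cay(G)$ with endpoints on $\{a^n\}$ straying far from it must lift through a disc diagram whose relator boundary forces its length to blow up, so that $a$ is Morse.

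For the MLTG property of $G$, the plan is to verify one of the available criteria ensuring that small-cancellation quotients of hyperbolic (or already MLTG) base groups inherit the local-to-global property. This should be largely automatic once the small-cancellation parameter is small enough and $\mathcal{R}$ is in a regime where such criteria apply; the real work there is matching hypotheses rather than introducing new ideas.

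The main obstacle is showing that $a$ cannot be loxodromic in any isometric action of $G$ on a hyperbolic space. The plan is to arrange $\mathcal{R}$ so that $G$ contains an infinite family $(g_n)$ of elements of bounded word length realizing conjugacies of the form $g_n a g_n^{-1} = a\, c_n$ with geometrically independent twisting factors $c_n$. In any isometric action $G \curvearrowright X$ on a hyperbolic space, conjugation preserves translation length, so $\tau_X(a) = \tau_X(a c_n)$ for every $n$; a geometric argument inside $X$ should then show this infinite system of equations cannot hold unless $\tau_X(a) = 0$. Equivalently, via the Dahmani--Guirardel--Osin characterization of generalized loxodromic elements, one aims to show that $\langle a\rangle$ does not extend to any virtually cyclic hyperbolically embedded subgroup of $G$. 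The delicate point is that the natural algebraic mechanisms for creating such obstructions --- enlarging the centralizer of $a$, adding reflections of the axis, or distorting $\langle a\rangle$ --- tend to destroy the Morse property of $a$. The crux of the proof will therefore be a quantitative small-cancellation bookkeeping that decouples these two effects, channeling the loxodromicity obstructions through long, rare interactions with $\{a^n\}$ that the Morse argument in the first step still tolerates.
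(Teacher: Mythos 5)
Your high-level framing (small-cancellation quotient, relators designed to touch the $a$-axis only in sublinearly short subwords for the Morse property, a separate mechanism killing loxodromicity) matches the spirit of the paper, but the two concrete mechanisms you propose have genuine gaps. For the MLTG property, there is no general criterion that small-cancellation quotients of free groups inherit MLTG: this is precisely where the subtlety lives, since infinitely presented $C'(1/9)$ groups are MLTG if and only if they fail the increasing partial small-cancellation (IPSC) condition (Lemma~\ref{lem:equivalence:ipsc-sigma-compact-mltg}), and plenty of small-cancellation quotients of free groups are \emph{not} MLTG. The paper handles this by starting from a non-IPSC presentation and proving a stability lemma (Lemma~\ref{lem:combination-ipsc}) showing the perturbation by the new relators preserves the failure of IPSC. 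Calling this ``largely automatic'' and ``matching hypotheses'' misses that this is a real step, not a routine verification.

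The non-loxodromicity mechanism you propose does not work as stated. If $(g_n)$ is a family of elements of \emph{bounded} word length in a finitely generated group, there are only finitely many distinct $g_n$, so the relations $g_n a g_n^{-1} = a c_n$ cannot provide an infinite system of independent constraints; and even granting infinitely many such relations, the equality of translation lengths $\tau_X(a) = \tau_X(ac_n)$ does not by itself force $\tau_X(a)=0$ (conjugate elements always have equal translation length, with no implication for the value). The paper's actual mechanism is entirely different and rests on a geometric lemma (Lemma~\ref{lem:shortsubpaths}) that you do not have: any sufficiently long embedded cycle in a $\delta$-hyperbolic space contains a subsegment of definite length whose endpoints are only $g(|C|)$-apart, with $g$ superlogarithmic but as slowly growing as desired. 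The new relators $c_w = a^{f(|r|)}\prod_{i} w\,u_{i}$ are added for \emph{every} cyclic subword $w$ of $r$ in the right length window, precisely because one cannot know in advance which $w$ will have close endpoints in a given action; whichever one does, the relation $c_w$ then forces the endpoints of $a^{f(|r|)}$ to be within roughly $M g(|r|)$ of each other, which is $o(f(|r|))$, so the orbit of $a$ grows sublinearly. Without this geometric input (or a replacement for it), the ``quantitative small-cancellation bookkeeping'' you describe in the final step has no engine driving the contradiction.
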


The group we construct to prove  Theorem~\ref{thm:mainthm} is an infinitely presented small-cancellation group. Such groups are acylindrically hyperbolic \cite{GH:acylindrical} and have recently been of interest to many researchers \cite{AbbottHume:gen_lox, AbbottHume:hyp_actions, Z:small_cancellation, Gruber:SQ, ArzhDrutu:SC}. Consequently, our result does not provide an answer to Question~\ref{ques:mltg-implies-acy}. However, Theorem~\ref{thm:mainthm} does suggest that Question~\ref{ques:mltg-implies-acy} may have a negative answer, as it eliminates the most intuitive reason to expect a positive one: that Morse elements in MLTG groups are nice enough to be WPD elements.

Our result  shows that any class of groups for which it is true that every Morse element of a group in this class is a loxodromic WPD has to be restrictive. One natural candidate would be MLTG groups which are \emph{Morse detectable}. Roughly, these are MLTG groups for which there exists a Lipschitz map to a hyperbolic space such that a quasi-geodesic in the Cayley graph of the group is Morse if and only if its image is a quasi-geodesic in the hyperbolic space; see \cite{russellsprianotran:thelocal} for the precise definition and details. 

\begin{ques}
    Does there exist a Morse detectable group satisfying Theorem~\ref{thm:mainthm}?
\end{ques}

While this question is open, we suspect that the group we construct in Section~\ref{sec:Construction} is, in fact, Morse detectable. If this is the case, it would be the first example of a group known to have a Morse detectability space but no equivariant Morse detectability space. (No group  satisfying Theorem~\ref{thm:mainthm} can admit an equivariant Morse detectability space). 

It is also not known whether being MLTG is equivalent to having a $\sigma$--compact Morse boundary. He, Spriano and the second author made some progress in this direction by proving that all MLTG groups have $\sigma$-compact Morse boundary and that for $C'(1/9)$--small-cancellation groups the two properties are indeed equivalent \cite{HeSprianoZbinden:sigma}. However, there was some fear that the result for $C'(1/9)$--small-cancellation group is due to the particularly nice behavior of such groups, rather than being part of a broader phenomenon. Theorem~\ref{thm:mainthm} shows that pathological behavior still occurs among these groups, providing hope that the more general result may hold. 

We note that the result of Theorem~\ref{thm:mainthm} is stronger than the element $a$ not being a loxodromic WPD element: the element $a$ is not even \textit{loxodromic} in any action of the group on a hyperbolic space. While there are groups that have no loxodromic isometries in any action on a hyperbolic space, called groups with Property (NL) \cite{BalFFGenSisto}, to the best of our knowledge no (NL) group is known to contain a Morse element. 

\subsection*{Idea of proof}
We are able to find such a counterexample among small cancellation groups because it is well-understood which $C'(1/9)$--small cancellation groups are MLTG: exactly when they do not satisfy the \textit{increasing partial small cancellation} (IPSC) condition \cite{HeSprianoZbinden:sigma}, a property  defined in Definition~\ref{def:IPSC}. Moreover, the IPSC condition is relatively stable under small changes of relators; see, e.g., Lemma~\ref{lem:combination-ipsc}. Furthermore, it is easy to determine which elements are Morse: they are precisely the ones whose intersection function (see Definition~\ref{def:intersection-function}) is sublinear \cite{arzhantseva2019negative}. 

To prove Theorem~\ref{thm:mainthm}, we start with any $C'(1/9)$--small cancellation group $G$ that does not satisfy the IPSC condition, and then modify the relators slightly to ensure the new group is still MLTG and contains an element with the desired property.  We add a generator, say $a$, which will be our Morse element that cannot be loxodromic in any action on hyperbolic space. To prohibit the element $a$ from being loxodromic, we add additional relators: for each possible subword $w$ of a relator of $G$, we add a new relator, which contains a large  power of $a$ and multiple occurrences of the word $w$, arranged in a way that does not violate the small cancellation condition. By choosing the power of $a$ to be sublinear in the length of the relator, we ensure that the element $a$ is Morse. Now consider an action of this new group $G$ on a hyperbolic space $X$. By Lemma~\ref{lem:shortsubpaths}, which states that cycles embedded in a hyperbolic space must have a subsegment of definite length whose endpoints are much closer than its length, each relator of $G$, has to have a subword $w$ whose endpoints are relatively close together in $X$. This then implies that the endpoints of the large power of $a$ in the new relator formed using $w$ also have to be relatively close together in $X$, showing that the orbit of $a$ cannot be quasi-isometrically embedded in $X$. Consequently, $a$ is not a loxodromic isometry of $X$.

\subsection*{Outline of paper}
In Section~\ref{sec:prelim} we provide some background on small-cancellation groups and the increasing partial small cancellation condition. In Section~\ref{sec:cycles-in-hyperbolic} we prove Lemma~\ref{lem:shortsubpaths}, which roughly states that cycles embedded in a hyperbolic space must have a subsegment of definite length whose endpoints are much closer than its length. In Section~\ref{sec:Construction}, we construct the group $G$ from Theorem~\ref{thm:mainthm}.

\subsection*{Acknowledgements}
  We would like to thank Cornelia Dru\c{t}u, Alessandro Sisto and Davide Spriano for interesting conversations. 
 The first author was partially supported by NSF grants DMS-2106906 and DMS-2340341.

\section{Preliminaries}\label{sec:prelim}
Let $X$  be a geodesic metric space. For points $x, y\in X$, we denote by $[x, y]$ a fixed choice of geodesic from $x$ to $y$. By an abuse of notation, we denote the image $\mathrm{Im}(p)$ of a path $p\colon I \to X$   by $p$ and its endpoints by $p^-$ and $p^+$. Given $x, y\in p$, we denote by $[x, y]_p$ a choice of subsegment $p[s, t]$ such that $p(s) = x$ and $p(t) = y$ and $s\leq t$. We denote the length of (the domain of) $p$ by $\abs{p}$. Given a cycle $K$, we say a 1--Lipschitz map $C\colon K \to X$ is an \textit{embedde cycle}.

\subsection{The Morse local-to-global property}\label{sec:mltg}

In this section, we define the Morse local-to-global property, a property introduced in \cite{russellsprianotran:thelocal}.

    A function $M\colon \R_{\geq 1}\to \R_{\geq 0}$ is called a \emph{Morse gauge} if it is non-decreasing and continuous.
    A quasi-geodesic $\gamma$ is called \emph{$M$-Morse} for some Morse gauge $M$ if every $Q$--quasi-geodesic $\lambda$ with endpoints $\gamma(s)$ and $\gamma(t)$ stays in the closed $M(Q)$--neighbourhood of $\gamma[s, t]$. A quasi-geodesic is called \emph{Morse} if it is $M$--Morse for some Morse gauge $M$.  An element $a$ of a finitely generated group $G$ is \textit{Morse} if the subset $\{a^i\}_{i\in \mathbb Z}$ in some (equivalently, any) Cayley graph of $G$ is a Morse quasi-geodesic.

We say that a path $p \colon I\to X$ \emph{$L$--locally satisfies a property $(P)$} if for every $s, t \in I$ with $\abs{t - s} \leq L$ the subpath $p[s, t]$ has the property $(P)$. The following property, introduced by Russell, Spriano, and Tran in \cite{russellsprianotran:thelocal}, generalizes the property of Gromov hyperbolic spaces that every local quasi-geodesic is a global quasi-geodesic.  

\begin{defn}[Morse local-to-global]\label{def_morse-local-to-global}
    A metric space $X$ satisfies the \emph{Morse local-to-global} (MLTG) property if the following holds. For any constant $Q\geq 1$ and Morse gauge $M$, there exists a scale $L$, a constant $Q'\geq 1$ and a Morse gauge $M'$ such that every path that is $L$--locally an $M$--Morse $Q$--quasi-geodesic is an $M'$--Morse $Q'$--quasi-geodesic. 
\end{defn}

\subsection{Small cancellation}

We now define the notions from small-cancellation necessary for this  paper. For further background on small-cancellation, we refer the reader to \cite{lyndon1977combinatorial}. 

\textbf{Notation and Conventions.} For the rest of this paper, unless specified otherwise, $ S$ denotes a finite set of formal variables, ${ S}^{-1}$ its formal inverses and $\ov{ S}$ the symmetrised set $  S \cup { S}^{-1}$. A word $w$ over $ S$ (respectively $\ov{ S}$) is a finite sequence of elements in $ S$ (respectively $\ov{ S}$). By an abuse of notation, we sometimes allow words to be infinite.

A word $w$ over $\ov { S}$ is \emph{cyclically reduced} if it is reduced and all its cyclic shifts are reduced. Given a set $ R$ of cyclically reduced words, we denote by $\overline{ R}$ the cyclic closure of $ R\cup  R^{-1}$. If $ R = \{w\}$ we sometimes denote $\ov{{R}}$ by $\ov w$. Given a cyclically reduced word $r$, we say that $w$ is a \emph{cyclical subword} of $r$ if $w$ is a subword of a word in $\ov{r}$.

\begin{defn}[Piece]
    Let $ S$ be a finite set and let $ R$ be a set of cyclically reduced words over $\ov{ S}$. A word $p$ is a \emph{piece} if there exists distinct words $r, r'\in \overline{ R}$ such that $p$ is a prefix of both $r$ and $r'$. We say that $p$ is a \emph{piece of a word} $r\in \ov{ R}$ if $p$ is a piece and a subword of $r$.
\end{defn}

\begin{defn}[$C'(\lambda)$ condition]
    Let $\lambda >0$ be a constant. A set $ R$ of cyclically reduced words satisfies the \emph{$C'(\lambda)$--small-cancellation condition} if for every word $r\in \overline{ R}$ and every piece $p$ of $r$ we have $\abs{p}<\lambda\abs{r}$. If $ R$ satisfies the $C'(\lambda)$--small-cancellation condition we call the finitely generated group $G = \pres$ a \textit{$C'(\lambda)$--group}.
\end{defn}

\subsubsection{Morse geodesics in small-cancellation groups} In this section we highlight some consequences of \cite{arzhantseva2019negative}, which allow us to characterize Morse geodesics in small-cancellation groups in terms of their intersections with relators.  Given a finitely generated group $G = \pres$, every edge path in $\cay$ is labelled by a word in $\ov{S}$.

\begin{defn}[Intersection function]\label{def:intersection-function}
Let $G = \pres$ be a $C'(1/6)$--group. Let $\gamma$ be an edge path in $\cay$. The \emph{intersection function} of $\gamma$ is the function $\rho \colon \N\to \R_+$ defined by 
\begin{align*}
    \rho(t) = \max_{\substack{r\in\ov{ R} \\ \abs{r}\leq t}}\left\{\abs{w}\,\Big\vert\,  \text{$w$ is a subword of $r$ and $\gamma$}\right\}.
\end{align*}
\end{defn}

The following lemma shows the relationship between Morseness and the intersection function;  it is a consequence of \cite[Theorem 1.4]{ACHG:contraction_morse_divergence} and \cite[Corollary 4.14, Theorem 4.1]{arzhantseva2019negative}.

\begin{lem}[{\cite[Theorem 4.1 \& Corollary 4.14]{arzhantseva2019negative}}]\label{lem:4.14} Let $G = \pres$ be a $C'(1/6)$--group. A geodesic in $\cay$ is Morse if and only if its intersection function is sublinear.
\end{lem}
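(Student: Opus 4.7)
The plan is to establish the equivalence by interposing a third property, namely being contracting, and showing each direction via the geometry of van Kampen diagrams in $C'(1/6)$--groups. Recall that a geodesic $\gamma$ is \emph{contracting} if the nearest-point projection to $\gamma$ sends balls disjoint from $\gamma$ to sets of uniformly bounded diameter, with an analogous sublinear version where the image diameter grows sublinearly in the distance from $\gamma$. In the setting of \cay, Morse geodesics and contracting geodesics should coincide, and contracting behavior should in turn be measurable via the intersection function $\rho$.

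For the direction Morse $\Rightarrow$ $\rho$ sublinear, I would argue contrapositively. Suppose $\rho(t)/t$ does not go to $0$; then there is $\varepsilon > 0$ and arbitrarily long relators $r_n \in \ov{R}$ each sharing a subword $w_n$ with $\gamma$ of length $\abs{w_n} \geq \varepsilon \abs{r_n}$. The complementary arc $r_n \setminus w_n$ is a path in $\cay$ between the endpoints of $w_n$ of length at most $(1-\varepsilon)\abs{r_n}$, and concatenating it with the portions of $\gamma$ outside $w_n$ yields a $Q$--quasi-geodesic (for $Q$ depending only on $\varepsilon$) that leaves the $\abs{r_n}/3$--neighbourhood of $\gamma$. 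As $\abs{r_n}\to\infty$, this prevents $\gamma$ from being $M$--Morse for any fixed $M$. This is essentially the "detour" argument used in \cite{arzhantseva2019negative}.

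For the converse, $\rho$ sublinear $\Rightarrow$ Morse, I would invoke \cite[Theorem 4.1]{arzhantseva2019negative}, which characterizes sublinearly contracting geodesics in $C'(1/6)$--groups precisely as those with sublinear intersection function, combined with \cite[Theorem 1.4]{ACHG:contraction_morse_divergence}, which shows that in the relevant small-cancellation setting (and in great generality) the sublinearly contracting condition on a quasi-geodesic is equivalent to the Morse condition. The main input behind Theorem 4.1 is the classification of geodesic bigons and quadrilaterals in $C'(1/6)$--groups via small-cancellation diagrams: two geodesics with the same endpoints differ only by traversing opposite sides of relator cells, so controlling how much of $\gamma$ is absorbed into any single cell controls how far an alternate quasi-geodesic can stray.

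The main obstacle, conceptually, is the converse direction, since showing that controlling intersections with relators is sufficient to control all possible quasi-geodesic excursions requires the full diagrammatic machinery of small cancellation; fortunately this is carried out in \cite{arzhantseva2019negative}, so the remaining work for us is purely a bookkeeping exercise combining the two cited theorems with Corollary 4.14 of the same paper, which upgrades the statement from geodesics to the appropriate class of paths.
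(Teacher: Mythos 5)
Your proposal is correct and matches the paper's approach: the paper itself gives no proof of this lemma beyond citing \cite[Theorem~1.4]{ACHG:contraction_morse_divergence} together with \cite[Theorem~4.1, Corollary~4.14]{arzhantseva2019negative}, which is exactly the combination you invoke, and your ``detour'' sketch for the forward direction is the standard idea underlying those cited results rather than a genuinely different route.
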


Further, as shown for example in \cite{Z:small_cancellation}, having an intersection function which is relatively small implies being a geodesic. 

\begin{lem}[{\cite[Lemma~2.7]{Z:small_cancellation}}]\label{lemma:geodesic_condition}
Let $G = \pres$ be a $C'(1/6)$--group, and let $\gamma$ be a path in $\cay$ labelled by a reduced word $w$. If the intersection function $\rho$ of $\gamma$ satisfies 
\begin{align}\label{eq:geodesic_condition}
    \rho(t)\leq t/3 \qquad\text{for all $t\geq 0$},
\end{align}
then $\gamma$ is a geodesic.
\end{lem}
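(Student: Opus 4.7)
The plan is to argue by contrapositive, using the classical Dehn's algorithm for $C'(1/6)$ small-cancellation groups. The relevant form --- a standard consequence of Greendlinger's Lemma, see e.g.\ \cite{lyndon1977combinatorial}, Chapter V.4 --- asserts that a reduced word $w$ over $\overline{S}$ in a $C'(1/6)$-group is a geodesic in the Cayley graph if and only if no subword of $w$ is also a subword of some $r \in \overline{R}$ of length strictly greater than $|r|/2$.

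Given this input, the lemma follows immediately. If $\gamma$ were not a geodesic, then the reduced word $w$ labelling $\gamma$ would contain a subword $v$ which is also a subword of some $r \in \overline{R}$ with $|v| > |r|/2$. By the definition of the intersection function, we would then have
\[
\rho(|r|) \geq |v| > \frac{|r|}{2} > \frac{|r|}{3},
\]
contradicting the hypothesis $\rho(t) \leq t/3$ applied at $t = |r|$. Hence $\gamma$ must be a geodesic.

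The main obstacle, if one wishes to reprove the needed form of Dehn's algorithm from scratch rather than citing it, lies in a delicate subcase of the van Kampen argument. Letting $\gamma'$ be a geodesic with the same endpoints as $\gamma$ (so $|\gamma'| < |\gamma|$), and taking a minimal-area reduced van Kampen diagram $D$ for the loop $\gamma \cdot (\gamma')^{-1}$, Greendlinger's Lemma produces a face $f$ whose boundary meets $\partial D$ in a connected arc $\sigma$ of length $> |\partial f|/2$. If $\sigma$ lies entirely in $\gamma$, one reads off the required subword directly; and if $\sigma$ lies entirely in $(\gamma')^{-1}$, one replaces $\sigma$ by the complementary arc of $\partial f$ to shorten $\gamma'$, contradicting that it is a geodesic. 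The hard case, which is the main obstacle, is when $\sigma$ straddles one of the two common endpoints of $\gamma$ and $\gamma'$; this is resolved by a diagonal replacement across $f$, producing a new loop that bounds a diagram of strictly smaller area than $D$ and contradicting minimality. The extra slack in the hypothesis $\rho(t) \leq t/3$, as compared to $\rho(t) \leq t/2$, is convenient precisely for controlling the $\gamma$-side of such straddling arcs.
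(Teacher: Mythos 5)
The paper does not prove this lemma; it quotes it verbatim from \cite{Z:small_cancellation} (Lemma~2.7), so there is no in-paper proof to compare against. I will therefore assess your argument on its own terms.

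Your reduction to a Dehn-algorithm characterization (``$w$ is geodesic iff it contains no subword of some relator $r$ of length $>|r|/2$'') would indeed make the lemma immediate, but that characterization is not a clean, citable statement; what is standard from Greendlinger's Lemma is the word-problem statement (a nontrivial reduced word representing $1$ contains a subword $>|r|/2$), and upgrading this to a geodesic criterion is exactly the content one would have to re-derive. You acknowledge this and correctly identify the problematic step: when one applies Greendlinger to a minimal-area diagram $D$ bounded by $\gamma$ and a geodesic $\gamma'$, the Greendlinger face(s) may meet $\partial D$ in an arc that straddles one of the two common endpoints of $\gamma$ and $\gamma'$. However, the resolution you sketch (``a diagonal replacement across $f$ producing a diagram of strictly smaller area, contradicting minimality'') is not a proof: cutting across the face changes the boundary loop, so the new diagram is no longer a diagram for $\gamma(\gamma')^{-1}$ (nor for $\gamma(\gamma'')^{-1}$ for any obvious geodesic $\gamma''$), and minimality of $D$ in its original class gives no contradiction. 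This case needs a genuine quantitative argument, and a natural attempt shows why it is delicate: if the straddling arc $\sigma$ splits as $\sigma_\gamma\sigma_{\gamma'}$, geodesicity of $\gamma'$ only gives $|\sigma_{\gamma'}|\le |\partial f|/2$, and combined with $|\sigma|>|\partial f|/2$ this yields merely $|\sigma_\gamma|>0$, which is useless. One does obtain the useful bound $|\sigma_\gamma|\ge |\partial f|/2-|\tau|$ (where $\tau$ is the interior arc of the shell, using that $\gamma'$ is geodesic so $|\sigma_{\gamma'}|\le|\sigma_\gamma|+|\tau|$), and this gives $|\sigma_\gamma|>|\partial f|/3$ when $|\tau|<|\partial f|/6$ (a $1$-shell, e.g.\ the end of a ladder), but not for general shells whose interior arc can be up to $3$ pieces. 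So the case analysis has a real hole.

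Finally, your closing remark that the slack between $t/3$ and $t/2$ is ``convenient for controlling the $\gamma$-side of straddling arcs'' is exactly right in spirit, but it is in tension with the structure of your proof: as written you reduce everything to the $t/2$ threshold and never use the $t/3$ hypothesis except to observe $t/3<t/2$. The very fact that the lemma is stated with the threshold $t/3$ is strong evidence that the $t/2$ ``iff'' you cite is not available off the shelf, and that the missing slack is what makes the straddling case close. As it stands, the proposal correctly locates the difficulty but does not close it.
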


\subsubsection{Increasing partial small-cancellation condition}

In \cite{Z:small_cancellation}, the notion of a $C'(1/f)$--group for certain functions $f$ is introduced in order to define the increasing partial small-cancellation condition (see Definition~\ref{def:IPSC}). A non-decreasing function $f\colon \N \to \R_+$ is viable if $f(n)\geq 6$ for all $n$ and $\lim_{n\to \infty} f(n) = \infty$.

\begin{defn}\label{def:c1f} Let $f$ be a viable function. A finitely generated group $G = \pres$ is a \textit{$C'(1/f)$--group} if $ R$ is infinite and for every piece $p$ of a relator $r\in \ov{ R}$ we have that $\abs{p}<\abs{r} / f(\abs{r})$. 
\end{defn}

In other words, in $C'(1/f)$--groups, pieces of large relators are smaller fractions of their relators than pieces of smaller relators. The condition that $f(n)\geq 6$ ensures that all $C'(1/f)$--groups are $C'(1/6)$--groups. Instead of requiring that a set of relators $ R$ satisfies the $C'(1/f)$ condition as a whole, we can also require that certain words satisfy the $C'(1/f)$ condition with respect to $ R$.

\begin{defn}\label{def:loacl:c1f}
    Let $f$ be a viable function, let $x$ be a reduced word and let $ R$ be a set of cyclically reduced words. The pair $(x,  R)$ satisfies the \textit{$C'(1/f)$--condition} if every piece $p$ of a relator $r\in \ov { R}$ which is a subword of $x$ satisfies $\abs{p}< \abs{r}/f(\abs{r})$.
\end{defn}

\begin{remark}\label{rem:cf-inclusion}
    If $(x,  R)$ satisfies the $C'(1/f)$--condition, and $ R'$ is a subset of $ R$, then $(x,  R')$ also satisfies the $C'(1/f)$--condition. Further, if $y$ is a subword of $x$ and $(x,  R)$ satisfies the $C'(1/f)$--condition, then so does $(y,  R)$.
\end{remark}

The following definition introduced in \cite{Z:small_cancellation} quantifies having sufficiently long subwords $w$ of longer and longer relators such that $(w,  R)$ satisfies the $C'(1/f)$--small-cancellation condition. 

\begin{defn}[IPSC]\label{def:IPSC}
     A $C'(1/6)$--group $G = \pres$ satisfies the \emph{increasing partial small-cancellation condition (IPSC)} if for every sequence $(n_i)_{i\in \N}$ of positive integers, there exists a viable function $f$ such that the following holds. For all $K\geq 0$ there exists $i\geq K$ and a relator $r = xy\in \ov{ R}$ satisfying: 
    \begin{enumerate}[label= \roman*)]
        \item $\abs{r}\geq n_i$,
        \item $\abs{x}\geq \abs{r}/i$,
        \item the pair $(x,  R)$ satisfies the $C'(1/f)$--small-cancellation condition.
    \end{enumerate}
\end{defn}

The motivation for the IPSC condition is its connection to the Morse local-to-global property and the Morse boundary of the group $G$.

\begin{lem}[{\cite[Theorem~C]{HeSprianoZbinden:sigma}} and {\cite[Theorem~B]{Z:small_cancellation}}]\label{lem:equivalence:ipsc-sigma-compact-mltg} For  a $C'(1/9)$--group $G = \pres$,  the following are equivalent:
\begin{enumerate}
    \item  $G$ satisfies the Morse local-to-global property,
    \item $\pres$ does not satisfy the IPSC condition, and
    \item the Morse boundary $\partial_* G$ is $\sigma$-compact.
\end{enumerate}
\end{lem}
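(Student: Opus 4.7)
The plan is to obtain this three-way equivalence by directly assembling the two cited results. Since both are already published, my task reduces to identifying which pair of equivalences each reference supplies and verifying that the hypotheses match up so that the two pieces can be chained.

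First, I would invoke \cite[Theorem~C]{HeSprianoZbinden:sigma}. As previewed in the introduction, that theorem is precisely the statement that, for $C'(1/9)$--groups, the Morse local-to-global property is equivalent to $\sigma$-compactness of the Morse boundary. This gives (1) $\Longleftrightarrow$ (3). Next, I would invoke \cite[Theorem~B]{Z:small_cancellation}, which characterizes $\sigma$-compactness of the Morse boundary in terms of the failure of the IPSC condition, thereby furnishing (2) $\Longleftrightarrow$ (3). The desired equivalence (1) $\Longleftrightarrow$ (2) $\Longleftrightarrow$ (3) then follows by transitivity, with no extra argument needed.

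The main point requiring care is to check that the hypotheses of the two theorems agree. \cite[Theorem~C]{HeSprianoZbinden:sigma} is stated for $C'(1/9)$--groups, while \cite[Theorem~B]{Z:small_cancellation} may be stated at a different level of generality (e.g.\ $C'(1/6)$); I would check that on the class of $C'(1/9)$--groups both theorems apply and that the definitions of IPSC (Definition~\ref{def:IPSC}) and of the Morse boundary $\partial_* G$ used in the two sources coincide with the ones in this paper. Assuming (as is standard in this literature) that these conventions agree, the proof is a one-line assembly: chain the equivalences and note that $C'(1/9)$ is the common hypothesis under which both directions of each equivalence have been established.

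The expected main obstacle is therefore not any genuine mathematical content but rather this bookkeeping step of matching definitions across references. Once that is done, no further work on characterizing Morse geodesics via intersection functions (Lemma~\ref{lem:4.14}) or on analyzing the IPSC condition is required here, since those ingredients have already been absorbed into the cited theorems.
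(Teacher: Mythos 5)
Your proposal matches the paper's approach exactly: the paper states this lemma as a direct citation of the two references with no further proof, and your assembly — (1)\,$\Leftrightarrow$\,(3) from \cite[Theorem~C]{HeSprianoZbinden:sigma} and (2)\,$\Leftrightarrow$\,(3) from \cite[Theorem~B]{Z:small_cancellation}, chained by transitivity — is the intended reading. Your added bookkeeping about checking that both cited theorems apply on the common class of $C'(1/9)$--groups and that the IPSC and Morse boundary conventions agree is a reasonable precaution, but no new mathematics is required beyond what the citations already provide.
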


The following lemma quantifies the fact that the IPSC condition is invariant under small perturbations of the relators of a group. This allows for flexibility when constructing groups with specific properties that do not satisfy the IPSC condition. This lemma is our main tool to show that the groups we construct do not satisfy the IPSC condition and hence are Morse local-to-global groups.

\begin{lem}\label{lem:combination-ipsc}
    Let $G = \pres$ be a group that does not satisfy the IPSC condition. Let $ S'$ be a set of formal variables disjoint from $ S$, let $N, B$ be integers and let $\rho$ be a sublinear function. Further, let $ R'$ be a set of relators over $\ol{S\cup S'}$ such that for each relator $r'\in  R'$ we have 
    \begin{align}
        r' = \prod_{i=1}^ku_iv_i,
    \end{align}
    for some $1\leq k \leq N$ and such that for each $1\leq i \leq k$,
    \begin{enumerate}[label= (\alph*)]
        \item \label{prop:a}$u_i$ is a prefix of a relator $r_i\in \ov{ R}$,
        \item \label{prop:b}$\abs{u_i}\geq \abs{r_i}/B$,
        \item $B\abs{r'}\geq \abs{r_i}\geq \abs{r'}/B$, and \label{prop:similar-size}
        \item $v_i$ is a (possibly empty) word over $\ov{ S \cup S'}$ with $\abs{v_i}\leq \rho(\abs{r'})$.\label{prop:vi_is_subl}
    \end{enumerate}
    Then $\presu$ does not satisfy the IPSC condition. 
\end{lem}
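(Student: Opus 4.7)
The plan is to lift the failure of IPSC from $G$ to $\presu$ by constructing, from a sequence $(n_i)_{i\in\N}$ witnessing the failure for $G$, a new sequence $(n_i')$ such that any purported IPSC-witness in $\overline{R\cup R'}$ forces a corresponding witness in $\overline R$, which is forbidden. Fix a constant $M = M(N, B)$ (say $M = 3NB$, calibrated for the pigeonhole losses below) and set
\[
n_i' = \max\bigl(n_i,\, B\,n_{iM},\, t_i\bigr),
\]
where $t_i$ is large enough that $N\rho(t) \leq t/(4i)$ for all $t \geq t_i$; such $t_i$ exists by sublinearity of $\rho$.

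Given any viable function $f'$, the failure of IPSC for $G$ applied to $f'$ yields $K \in \N$ such that for all $j \geq K$, no $r = xy \in \overline R$ with $\abs{r} \geq n_j$ and $\abs{x} \geq \abs{r}/j$ has $(x, R)$ satisfying $C'(1/f')$. Set $K' = K$. Suppose toward contradiction that there exist $i \geq K'$ and $r' = x'y' \in \overline{R\cup R'}$ with $\abs{r'} \geq n_i'$, $\abs{x'} \geq \abs{r'}/i$, and $(x', R\cup R')$ satisfying $C'(1/f')$. If $r' \in \overline R$, then Remark~\ref{rem:cf-inclusion} applied to the inclusion $R \subseteq R\cup R'$ gives that $(x', R)$ satisfies $C'(1/f')$, directly contradicting the failure of IPSC for $G$ since $\abs{r'} \geq n_i$ and $i \geq K$.

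Otherwise $r'$ is (a cyclic shift or inverse of) an element of $R'$, so up to relabeling we may write $r' = \prod_{j=1}^k u_jv_j$ with $k \leq N$ satisfying conditions (a)-(d). The $v_j$'s contribute total length at most $N\rho(\abs{r'}) \leq \abs{r'}/(4i)$, so $x'$ overlaps $\bigcup u_j$ with total length at least $3\abs{r'}/(4i)$. Pigeonhole over $k \leq N$ blocks (with at most a factor of $2$ loss if $x'$ crosses the cyclic seam) produces some $j$ and a contiguous subword $\tilde x \subseteq x' \cap u_j$ with $\abs{\tilde x} \geq 3\abs{r'}/(8iN)$, which using $\abs{r'} \geq \abs{r_j}/B$ from condition (c) gives $\abs{\tilde x} \geq \abs{r_j}/(iM)$ for $M \geq 8NB/3$. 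Since $u_j$ is a prefix of $r_j \in \overline R$ by (a), a cyclic shift $\hat r_j \in \overline R$ has $\tilde x$ as a prefix. By Remark~\ref{rem:cf-inclusion}, $(\tilde x, R\cup R')$ and therefore $(\tilde x, R)$ satisfies $C'(1/f')$. Since $\abs{\hat r_j} = \abs{r_j} \geq \abs{r'}/B \geq n_{iM}$ and $iM \geq K$, this contradicts the failure of IPSC for $G$.

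The main obstacle is the pigeonhole step in the second case, where the cyclic structure of $r'$ must be navigated together with the sublinear filler contributions from the $v_j$'s; these are absorbed by the growth condition on $n_i'$, after which condition (c) (comparability of $\abs{r'}$ and $\abs{r_j}$) converts the overlap bound $\abs{\tilde x} \gtrsim \abs{r'}/(iN)$ into the required form $\abs{\tilde x} \geq \abs{r_j}/(iM)$. Condition (b), while not directly invoked in the estimate, is a structural feature ensuring that the prefixes $u_j$ are substantial portions of the $r_j$ rather than short pieces, which is consistent with the scaling used above.
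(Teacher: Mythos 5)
Your proof is correct and takes essentially the same approach as the paper's, just phrased in the contrapositive direction: the paper assumes $\presu$ satisfies IPSC and derives that $\pres$ does too, while you assume $\pres$ fails IPSC and derive that $\presu$ fails it. The core mechanism — constructing $(n_i')$ to absorb a multiplicative loss from (c) and a loss from the pigeonhole over the at most $2N$ block-pieces, using sublinearity of $\rho$ to ensure the $v_j$'s cannot dominate the overlap, and then producing a prefix witness in $\ov{R}$ via Remark~\ref{rem:cf-inclusion} — is identical, with only cosmetic differences in bookkeeping (you subtract the total $v_j$-mass before pigeonholing over the $u_j$'s with a factor-of-two cyclic seam correction, whereas the paper pigeonholes over $2N+1$ block-pieces first and then rules out $v_j$-pieces by the length bound).
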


It is possible that $\presu$ does not satisfy the $C'(1/6)$--condition. In that case the statement is vacuously true, since  the IPSC condition implies  the $C'(1/6)$--condition.

\begin{proof}[Proof of Lemma~\ref{lem:combination-ipsc}]
    Assume toward a contradiction that $\presu$ satisfies the IPSC condition. We will show that this implies that $\pres$ satisfies the IPSC condition. 

    Let $(n_i)_{i\in \N}$ be a sequence of integers. Define a sequence $(n'_i)_{i\in \N}$ such that 
    \begin{enumerate}[label=(\roman*)]
        \item if $t\geq n_i'$, then $\rho(t) < t/(i(2N+1))$,\label{prop:subcons}
        \item $n_i'\geq Bn_j$ for all $1\leq j\leq i(2N+1)B$.\label{prop:defni2}
    \end{enumerate}
    Note that we can find a sequence $n_i'$ that satisfies \ref{prop:subcons} (and hence which satisfies both \ref{prop:subcons} and \ref{prop:defni2}) because $\rho$ is sublinear. 
    Since $\presu$ satisfies the IPSC condition, there exists a viable function $f$ such that for each integer $K$, there exists $i\geq K$ and a relator $r_i' = x_i'y_i'\in \ov{ R \cup  R'}$ satisfying 
    \begin{enumerate}[label=(\arabic*)]
        \item $\abs{r_i'}\geq n_i'$,\label{p2}
        \item $\abs{x_i'}\geq \abs{r_i'}/i$,\label{p1}
        \item $(x_i',  R \cup  R')$ satisfies the $C'(1/f)$--condition.\label{p3}
    \end{enumerate}
    If $r_i'\in \ov{ R}$, then define $r_i=r_i'$, $x_i = x_i'$ and $l = i$. If, on the other hand, $r_i' \in  R'$, then define $l= i(2N+1)B$. In this case, there exists an integer $k$ and words $u_j, v_j, w_j$ for $1\leq j \leq k$ as in the statement such that $r_i'\in \ov{u_1v_1\ldots u_kv_k}$. By the pigeon hole principle, there exists $j$ such that $u_j$ or $v_j$ share a common subword, which we call $x_i$, with $x_i'$ that is of length at least $\abs{x_i'}/(2N+1)$. Consequently, $\abs{x_i}\geq \abs{r_i'}/(i(2N+1))$.  On the other hand, by \ref{prop:subcons} in our choice of $n_i'$ and \ref{prop:vi_is_subl}, we see that $\abs{v_j} \leq \rho(\abs{r_i'}) <\abs{r_i'}/i(2N+1)$.  Hence $x_i$ cannot be a subword of $v_j$ for any $j$. Thus $x_i$ is a subword of $u_j$ for some $j$. In particular, we can choose $r_i\in \ov{w_j}$ such that $x_i$ is a prefix of $r_i$. We have that 
    \[
    \abs{x_i}\geq \frac{\abs{x_i'}}{(2N+1)}\geq \frac{\abs{r_i'}}{i(2N+1)}\geq \frac{\abs{r_i}}{{i(2N+1)B}} = \frac{\abs{r_i}}{l},
    \]
    where the first inequality holds by the definition of $x_i$, the second  due to \ref{p1} and the third due to \ref{prop:similar-size}. Further, we have that 
    \[
    \abs{r_i}\geq \frac{\abs{r_i'}}{B}\geq \frac{n_i'}{B}\geq n_{i(2N+1)B} = n_l,
    \]
    where the first inequality holds due to \ref{prop:similar-size}, the second due to \ref{p2} and the third due to \ref{prop:defni2}. 

    Regardless of whether $r_i'\in \ov{ R}$ or $r_i'\in \ov{ R'}$ we have defined a relator $r_i\in \ov{ R}$, a prefix $x_i$ of $r_i$ and an integer $l\geq i\geq K$ such that 
    \begin{enumerate}[label = \Roman*)]
        \item $\abs{r_i}\geq n_l$,
        \item $\abs{x_i}\geq \abs{r_i}/l$, and
        \item $(x_i,  R)$ satisfies the $C'(1/f)$--condition.
    \end{enumerate}
    Indeed, the first two properties follow directly from \ref{p2} and \ref{p1} if $r_i'\in \ov{ R}$ and are shown to hold above if $r_i'\in\ov{ R'}$. The third property follows from \ref{p3} and  Remark~\ref{rem:cf-inclusion}.

    Therefore, $\pres$ satisfies the IPSC condition, which concludes the proof.     
\end{proof}

\section{Embedded cycles in hyperbolic spaces}\label{sec:cycles-in-hyperbolic}

In this section, we show that if a cycle $C$ is embedded in a hyperbolic space, then there is a subpath of $C$ with definite length whose endpoints are much closer than its length; Lemma~\ref{lem:shortsubpaths}.  We begin with some preliminary results. 

\begin{lem}[{\cite[Proposition III.H.1.6]{BH}}]\label{lem:hyperbolic-paths}
    Let $X$ be a $\delta$-hyperbolic space and let $\lambda : I \to X$ be a $1$-Lipschitz path from $x$ to $y$. Then $[x, y]$ is contained in the $f(\abs{\lambda})$ neighbourhood of $\lambda$, where $f(n) = \delta\abs{\log_2(n)}+1$.
\end{lem}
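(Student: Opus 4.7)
The plan is to prove the bound via a standard divide-and-conquer argument: repeatedly halve the path $\lambda$ and apply $\delta$-thinness of geodesic triangles at each step. Fix a point $p\in [x,y]$; it suffices to exhibit a point of $\lambda$ within distance $f(|\lambda|)$ of $p$. I first dispose of the trivial range $|\lambda|\leq 2$: since $\lambda$ is $1$-Lipschitz, $d(x,y)\leq |\lambda|\leq 2$, so $p$ lies within distance $1$ of $x$ or $y$, both of which are on $\lambda$; as $f(|\lambda|)\geq 1$, the bound holds. So I may assume $L:=|\lambda|>2$, in which case $|\log_2 L|=\log_2 L$.

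Next I set up the iteration. Given a subpath $\lambda'$ of $\lambda$ with endpoints $x', y'\in \lambda$ and any point $q\in [x',y']$, let $m$ be the midpoint of $\lambda'$ (a point of $\lambda$), and form the geodesic triangle on $\{x',m,y'\}$. By $\delta$-thinness, $q$ lies within distance $\delta$ of some $q'\in [x',m]\cup [m,y']$, and this new point $q'$ lies on a geodesic joining two points of $\lambda$ whose connecting subpath along $\lambda$ has length $|\lambda'|/2$. Starting from $q_0=p$ on $[x,y]$ (with the full $\lambda$ playing the role of $\lambda'$) and iterating $k$ times, I obtain a point $q_k$ with $d(p,q_k)\leq k\delta$ that lies on a geodesic whose endpoints $x_k, y_k\in \lambda$ are joined along $\lambda$ by a subpath of length $L/2^k$.

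Finally, I choose $k=\lceil \log_2 L\rceil -1$. This is a nonnegative integer since $L>2$, and $L/2^k\leq 2$; hence $d(x_k,y_k)\leq 2$, so $q_k$ lies within distance $1$ of $x_k$ or $y_k$, each of which is on $\lambda$. Combining gives $d(p,\lambda)\leq k\delta+1\leq (\log_2 L)\delta+1=f(L)$, using $\lceil\log_2 L\rceil -1\leq \log_2 L$. The only real obstacle is the bookkeeping: tuning the stopping index $k$ so that the ceiling is absorbed without producing an extra $\delta$ in the final bound, and checking that both branches $|\lambda|\leq 2$ and $|\lambda|>2$ give the stated constant. The geometric content is simply the iterated application of $\delta$-thinness to successive halvings of $\lambda$.
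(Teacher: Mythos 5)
Your proof is correct and is essentially the standard bisection argument from Bridson--Haefliger's Proposition III.H.1.6, which is exactly the reference the paper cites for this lemma (the paper itself gives no proof). One minor terminological note: the property you invoke at each halving step---that a point on one side of a geodesic triangle lies within $\delta$ of the union of the other two sides---is the $\delta$-\emph{slim} condition rather than $\delta$-thinness, but this is what BH take as the definition of $\delta$-hyperbolic and it does not affect your argument.
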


\begin{cor}\label{cor:hyperbolic-paths}
    Let $X$ be a $\delta$-hyperbolic space, let $x, y\in X$ and let $D\geq 0$ be a constant. Let $\lambda \colon  I \to X$ be a $1$-Lipschitz path from $x$ to $z$ for some $z$ with $d([x, y], z)\leq D$ and let $u$ be a point on $[x, y]$ with $d(z, u)\leq D$. Then $[x, u]_{[x, y]}$ is contained in the $f(\abs{\lambda} + D)+D$ neighbourhood of $\lambda$, where $f$ is as in Lemma~\ref{lem:hyperbolic-paths}.
\end{cor}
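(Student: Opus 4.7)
The plan is to extend $\lambda$ to a 1-Lipschitz path that ends at $u$, apply Lemma~\ref{lem:hyperbolic-paths} to this extended path, and then translate the resulting neighborhood back to one around $\lambda$ itself.

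First I would choose a 1-Lipschitz path $\gamma$ from $z$ to $u$ of length at most $D$, which exists because $d(z,u)\leq D$ (for instance, a geodesic). Concatenating $\lambda$ with $\gamma$ and reparametrizing by arclength yields a 1-Lipschitz path $\lambda'$ from $x$ to $u$ with $\abs{\lambda'}\leq \abs{\lambda} + D$.

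Next I would apply Lemma~\ref{lem:hyperbolic-paths} to $\lambda'$. Since $u\in[x,y]$, the subsegment $[x,u]_{[x,y]}$ is itself a geodesic from $x$ to $u$, so I may take it as the chosen geodesic $[x,u]$ appearing in the lemma. The lemma then guarantees that every point of $[x,u]_{[x,y]}$ lies within distance $f(\abs{\lambda'}) \leq f(\abs{\lambda} + D)$ of some point of $\lambda'$, using that $f$ is non-decreasing on the relevant range.

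Finally I would translate this estimate back to $\lambda$. Every point of $\lambda'\setminus\lambda$ lies on $\gamma$, hence within distance $\abs{\gamma}\leq D$ of the endpoint $z$ of $\lambda$. So every point of $\lambda'$ lies within $D$ of $\lambda$, and the triangle inequality then gives that every point of $[x,u]_{[x,y]}$ lies within $f(\abs{\lambda} + D) + D$ of $\lambda$, as required. There is no real obstacle here; the only minor care needed is to check that $\lambda'$ is genuinely 1-Lipschitz after arclength reparametrization and that the monotonicity of $f$ justifies the substitution $f(\abs{\lambda'})\leq f(\abs{\lambda}+D)$.
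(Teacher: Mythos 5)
Your proof is correct and follows essentially the same route as the paper's: both concatenate $\lambda$ with a short path from $z$ to $u$ to obtain $\lambda'$, apply Lemma~\ref{lem:hyperbolic-paths} to $\lambda'$, and then transfer the neighbourhood estimate back to $\lambda$ using that $\lambda'$ lies in the $D$-neighbourhood of $\lambda$. The only cosmetic difference is that the paper takes the geodesic $[z,u]$ where you allow any $1$-Lipschitz path of length at most $D$; your explicit remarks about reparametrization and the monotonicity of $f$ are sensible bookkeeping that the paper leaves implicit.
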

\begin{proof}
    Let $\lambda'$ be the concatenation of $\lambda$ and $[z, u]$, so that $\abs{\lambda'} \leq \abs{\lambda} + D$. Applying Lemma~\ref{lem:hyperbolic-paths} yields that $[x, u]_{[x, y]}$ is in the $f(\abs{\lambda'})\leq f(\abs{\lambda} + D)$ neighbourhood of $\lambda'$. Since $\lambda'$ is in the $D$--neighbourhood of $\lambda$, the statement follows.
\end{proof}

\begin{remark}\label{rem:hyperbolic-paths}
    Consider the function $f' (n) = f(n + f(n)) + f(n)$, where $f$ is the function from Lemma~\ref{lem:hyperbolic-paths}. Since $f$ is logarithmic, $f(n)\leq n$ for large $n$, and hence $f'$ is logarithmic as well. In particular, applying Corollary~\ref{cor:hyperbolic-paths} to $\lambda$ and $D$ for $D\leq f(\abs{\lambda})$ yields that $[x, u]_{[x, y]}$ is contained in the $f'(\abs{\lambda})$--neighbourhood of $\lambda$.
\end{remark}

\begin{lem}\label{lem:shortsubpaths}
    Let $g$ be a sublinear function that is superlogarithmic. For all integers $U\geq 1$ there exists an integer $L\geq U$ such that for all $\delta\geq 0$ there exists $N = N(\delta, U, L)$ such that the following holds. If $C$ is an embedded cycle in a $\delta$-hyperbolic space and $\abs{C}\geq N$, then there exists a subsegment $\lambda$ of $C$ with endpoints $\lambda^-$ and $\lambda^+$ such that   
    \begin{align}\label{eq1}
        \frac{\abs{C}}{L}\leq \abs{\lambda}\leq \frac{\abs{C}}{U}
    \end{align}
    and 
    \begin{align}\label{eq2}
          d(\lambda^-, \lambda^+)\leq g(\abs{C}).
    \end{align}
\end{lem}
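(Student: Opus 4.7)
The plan is to split the cycle at two antipodal points and invoke \Cref{lem:hyperbolic-paths} to produce a family of pairs of points on $C$ that are $X$-close but at tunable cycle-distance apart, then extract one whose cycle-distance lies in the target range.

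Parametrise $C\colon [0,n]\to X$ with $n=\abs{C}$ and $C(0)=C(n)$. Set $P_1=C|_{[0,n/2]}$ and $P_2=C|_{[n/2,n]}$; these are $1$-Lipschitz paths of length $n/2$ sharing endpoints $C(0), C(n/2)$. Let $\gamma$ be a geodesic between these endpoints, of length $T\leq n/2$. By \Cref{lem:hyperbolic-paths}, $\gamma\subset N_{f(n/2)}(P_1)\cap N_{f(n/2)}(P_2)$ with $f(m)=\delta\log_2 m+1$. Hence for each $\tau\in[0,T]$ we may pick $s_1(\tau)\in[0,n/2]$ and $s_2(\tau)\in[n/2,n]$ with $d(C(s_i(\tau)),\gamma(\tau))\leq f(n/2)$, and the triangle inequality gives $d(C(s_1(\tau)),C(s_2(\tau)))\leq 2f(n/2)$. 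Since $g$ is superlogarithmic, for $N=N(\delta,U,L)$ chosen large enough this chord is at most $g(n)$.

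Let $\lambda(\tau)$ be the subarc of $C$ from $C(s_1(\tau))$ through $C(n/2)$ to $C(s_2(\tau))$; its length is $\ell(\tau)=s_2(\tau)-s_1(\tau)$. Taking $(s_1,s_2)=(0,n)$ at $\tau=0$ gives $\ell(0)=n$, while $(s_1,s_2)=(n/2,n/2)$ at $\tau=T$ gives $\ell(T)=0$. The task is then to find $\tau$ with $\ell(\tau)\in[n/L,n/U]$. If the selection $\tau\mapsto (s_1(\tau),s_2(\tau))$ were continuous this would follow from the intermediate value theorem. Lacking continuity, I plan to discretise $[0,T]$ at spacing $T/K$ for large $K$ and track $\ell_i=\ell(\tau_i)$, exploiting the freedom in the choice of $(s_1^{(i)},s_2^{(i)})$: since $\gamma(\tau_i)$ and $\gamma(\tau_{i+1})$ are at distance $T/K$, the sets of valid pairs at consecutive samples overlap significantly, and updates can be made so as to keep $\abs{\ell_{i+1}-\ell_i}$ smaller than the width of the target interval. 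Choosing $L$ suitably larger than $U$ (for instance $L=2U$), and then $K$ and $N$ large, will force some $\ell_i$ into $[n/L,n/U]$, completing the construction.

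The principal obstacle is making the discretisation/selection argument rigorous: this requires a quantitative handle on how the set of valid $(s_1,s_2)$ changes as $\tau$ varies. It reduces to understanding the semi-continuity of the set-valued map $\tau\mapsto\{(s_1,s_2): d(C(s_i),\gamma(\tau))\leq f(n/2)\}$, which may be handled either by an ad hoc argument using the $1$-Lipschitz structure of $P_1, P_2$ or by applying a continuous-selection theorem after a suitable thickening.
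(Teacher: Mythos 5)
The plan — split $C$ at an antipodal pair, track pairs of points along a geodesic $\gamma$ between the two endpoints, and run an intermediate-value argument on the arc-length $\ell(\tau)=s_2(\tau)-s_1(\tau)$ — is natural, but the step you flag as "the principal obstacle" is in fact a genuine gap, and the proposed discretization does not close it.

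The problem is that Lemma~\ref{lem:hyperbolic-paths} gives $\gamma\subset\mc N_{f(n/2)}(P_i)$, not $P_i\subset\mc N_{f(n/2)}(\gamma)$. The path $P_i$ can leave the $f(n/2)$-neighborhood of $\gamma$, travel a long way, and return, so the valid set $A_i(\tau)=\{s: d(C(s),\gamma(\tau))\le f(n/2)\}$ is typically a disjoint union of intervals whose components appear and disappear as $\tau$ varies. At a $\tau$ where such an excursion leaves $\mc N_{f(n/2)}(\gamma(\tau))$, the admissible $s_i$ jumps discontinuously by the full length of the excursion, which can be a definite fraction of $n$. This jump is caused by the geometry of $C$, not by the sampling, so it persists for every choice of $K$; the overlap of valid sets at consecutive samples does not bound $\abs{\ell_{i+1}-\ell_i}$. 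And a continuous-selection theorem cannot apply, because $A_i(\tau)$ is in general disconnected, so no continuous selection exists. In short, the assertion that "updates can be made so as to keep $\abs{\ell_{i+1}-\ell_i}$ smaller than the width of the target interval" is precisely what has to be proved and is false in general.

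The missing idea is to treat the excursions themselves as candidates and recurse. Note that an excursion with both endpoints in $\mc N_D(z)$ is exactly a subpath of $C$ with close endpoints; if it is long enough (but still shorter than the current subpath) one restarts the argument on it, and if all excursions are short one can show the intermediate-value heuristic does produce a subsegment of the right length with close endpoints. This is what the paper's Claim~\ref{claim:recursion} does: it formalizes the notion of a $z$-excursion, recurses into any long excursion (case~\ref{rec-path}), and in the short-excursion case constructs the desired $\lambda'$ directly via the auxiliary points $a_i,a_i',u_i$ and the bound of Remark~\ref{rem:hyperbolic-paths} (case~\ref{base-case}), with $L=32U$ rather than $2U$ to absorb the various losses. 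Your outline essentially covers the base case but is missing the recursion, which is the crux of the argument.
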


\begin{proof}
     We will show that the statement holds for $L = 32U$. Let $f$ and $f'$ be the functions from Lemma~\ref{lem:hyperbolic-paths} and Remark~\ref{rem:hyperbolic-paths}, respectively. Let $\delta\geq 0$, and let $N\geq 2$ be such that for all $n\geq N$
    \begin{align}\label{eq:g-bound}
        g(n)\geq 12f(n) + 3\delta + f'(n) + 3.
    \end{align}
        Such an integer $N$ exists because $g$ is superlogarithmic while $f$ and $f'$ are logarithmic. 

    Let $X$ be a $\delta$--hyperbolic space, and let $C$ be an embedded cycle in $X$ with $\abs{C}\geq N$. Define $D = f(\abs{C})$ and $D' = f'(\abs{C})$. Note that since $N\geq 2$, we have that $D\geq \delta$. By Lemma~\ref{lem:hyperbolic-paths}, for any 1--Lipschitz path $\gamma$ with $\abs{\gamma}\leq \abs{C}$ we know that $[\gamma^-, \gamma^+]\subset \mc N_{D}(\gamma)$, and by Remark~\ref{rem:hyperbolic-paths}, for any pair of points $x \in \gamma$ and  $y\in [\gamma^-, \gamma^+]$ with $d(x, y)\leq D$, we have that $[y, \gamma^+]_{[\gamma^-, \gamma^+]}\subset \mc N_{D'}([x, \gamma^+]_{\gamma})$.

    Let $M_1 = 4U$ and $M_2 = 16U$. We will prove the following claim.

    \begin{claim}\label{claim:recursion}
        Let $\gamma\subset C$ be a subpath such that 
        \begin{enumerate}[label = \arabic*)]
            \item $d(\gamma^+, \gamma^-)\leq 2D$\label{close-end}
            \item $\abs{\gamma}\geq \abs{C}/M_1$\label{long}
        \end{enumerate}
        Then at least one of the following holds. 
        \begin{enumerate}[label=\roman*)]
            \item There exists a subpath $\gamma' \subset C$ which satisfies \eqref{eq1} and \eqref{eq2}. \label{base-case}
            \item There exists a subsegment $\gamma'\subset \gamma$ such that \ref{close-end} and \ref{long} hold for $\gamma'$ and $\abs{\gamma'} \leq \abs{\gamma}-1$. \label{rec-path}
        \end{enumerate}
    \end{claim}

    \textit{Proof of Claim.}
        If $\diam(\gamma)\leq g(\abs{C})$ we are done, since any subsegment $\lambda$ of $\gamma$ of length $\abs{\lambda} = \abs{C}/M_2$ satisfies \eqref{eq1} and \eqref{eq2}. Hence we can assume that there exists a point $y$ on $\gamma$ which satisfies $d(\gamma^+, y)\geq g(\abs{C})/2$. It follows from \eqref{eq:g-bound}  that $d(\gamma^+, \gamma^-)\leq g(\abs{C})/6$ and hence $d(\gamma^-, y)\geq g(\abs{C})/3$. 

        We denote $\gamma^-$ and $\gamma^+$ by $x_1$ and $x_2$ and the paths $[x_1, y]_{\gamma}$ and $[x_2, y]_{\gamma}$ by $\gamma_1$ and $\gamma_2$, respectively. Note that by \eqref{eq:g-bound}, $\abs{\gamma_i}\geq 1$ for $i=1, 2$.

        Let $i=1, 2$, and let $z\in [x_i, y]$. We say that a connected component of $\gamma_i - B_D(z)$ whose endpoints are in the $D$--neighbourhood of $z$ is a \emph{$z$--excursion of $\gamma_i$}.   

    \begin{figure}
        \centering
        \begin{subfigure}{.45\linewidth}
        \centering
        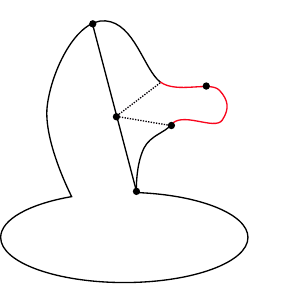
        \caption{ $a_1$ lies on a $u_1$--excursion $\lambda$ (red).}
        \end{subfigure} \hfill
        \begin{subfigure}{.45\linewidth}
        \centering
        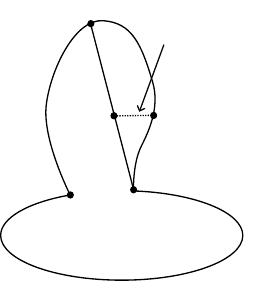
        \caption{$a_1$ is in the $D$--neighborhood of $[x_1,y]$.}
        \end{subfigure}
        \caption{Possible configurations of $a_1,a_1'$, and $u_1$ in the proof of Lemma~\ref{lem:shortsubpaths}.  Finding $a_2,a_2',u_2$ is similar.}
        \label{fig:Case1}
    \end{figure}

       Suppose that for $i=1$ or $2$ and some $z\in [x_i, y]$ there exists a $z$--excursion $\lambda$ of $\gamma_i$ which satisfies $\abs{\lambda}\geq \abs{C}/M_1$.  Then, by definition of a $z$--excursion, we have $d(\lambda^+, z)\leq D$ and $d(\lambda^-, z)\leq D$, and hence $d(\lambda^-, \lambda^+)\leq 2D$. Since $\lambda$ is a subset of $\gamma_i$, we have that $\abs{\lambda} \leq \abs{\gamma} - \abs{\gamma_j} \leq \abs{\gamma}-1$ for $j\neq i$. Hence $\lambda$ satisfies the requirements of  \ref{rec-path}, which concludes the proof of the claim.

        Thus we assume for the remainder of the proof of the claim that for $i\in \{1,2\}$,  for all $z\in [x_i, y]$ and for all $z$-excursions $\lambda$, we have $\abs{\lambda}\leq \abs{C}/M_1$. Assume without loss of generality that $\abs{\gamma_1}\geq \abs{\gamma_2}$. In particular, $\abs{\gamma_1}\geq \abs{\gamma}/2\geq \abs{C}/M_2$.

        For $i\in \{1,2\}$, choose $a_i\in \gamma_i$ such that $[a_i, y]_{\gamma_i}$ has length $\abs{C}/M_2$. While such a point $a_1$ always exists, the point $a_2$ is not well-defined if $\abs{\gamma_2}< \abs{C}/M_2$. In this case, define $a_2 = x_2$. If $d(a_i, [x_i, y])\leq D$, define $a_i' = a_i$, and define $u_i$ to be a point on $[x_i, y]$ with $d(u_i, a_i')\leq D$. If $d(a_i, [x_i, y]) > D$, then $a_i$ lies on a $u_i$--excursion $\lambda$ of $\gamma_i$ for some $u_i\in [x_i, y]$. In this case define $a_i'$ as the endpoint of $\lambda$ which lies between $a_i$ and $x_i$ on $\gamma_i$. If $\abs{\gamma_2} < \abs{C}/M_2$ we define $a_2' = u_2 = x_2$ instead. This is depicted in Figure~\ref{fig:Case1}.

        Suppose that $d(y, u_1) \leq d(y, u_2)+2D$. Let $u_1'$ be the point on $[x_2, y]$ with $d(y, u_2) =\max\{0, d(y, u_1) - 2D-\delta \}$. In particular, $d(y, u_1')\leq d(y, u_2)$ and by the hyperbolicity of $X$, $d(u_1, u_1')\leq 2D+3\delta$.\footnote{If $d(u_1, y)\leq 2D + \delta$, then this is immediate. So we assume $d(u_1, y) > 2D + \delta$, which in particular implies that $d(u_1', y) = d(u_1, y) - 2D - \delta$ and hence $d(u_1', [x_1, x_2]) > \delta$. Thus, by hyperbolicity, there exists a point $u_1''$ on $[x_1, y]$ with $d(u_1', u_1'')\leq \delta$. By the triangle inequality, $d(u_1', y)-\delta\leq d(u_1'', y)\leq d(u_1', y) + \delta$ and in particular, $d(u_1, u_1'')\leq 2D + 2\delta$. Implying that $d(u_1, u_1')\leq 2D + 3\delta$.}
        By the choice of $D$ and $D'$, there exists a point $a_2''\in [a_2', y]_{\gamma_2}$ with $d(a_2'', u_1')\leq D'$. Let $\lambda'$ be the subsegment of $\gamma$ from $a_1'$ to $a_2''$.  See Figure~\ref{fig:Case2}. By construction, $a_2$ lies between $y$ and $a_2'$ on $\gamma$. Hence

    \begin{figure}
        \centering
        \def\svgwidth{2in}
        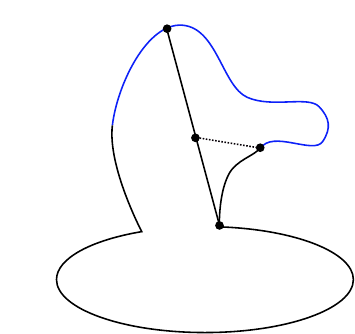
        \caption{Finding the desired subpath $\lambda'$ (blue) in the proof of Lemma~\ref{lem:shortsubpaths} when $d(y,u_1)<d(y,u_2)+2D$.}
        \label{fig:Case2}
    \end{figure}
    
        \begin{align*}
             \abs{[y, a_2']_{\gamma}}\leq  \abs{[y, a_2]_{\gamma}} +  \abs{[a_2, a_2']_{\gamma}}\leq \abs{C}/M_2 + \abs{C}/M_1,
        \end{align*}
        where for the second inequality we used the definition of $a_2$ and the assumption that any $\lambda$-excursion has length at most $\abs{C}/M_1$. Similarly, $\abs{[a_1', y]_{\gamma}}\leq \abs{C}/M_1 + \abs{C}/M_2$. By definition, $\lambda'$ is a subpath of $[a_1', a_2']_{\gamma}$, yielding the following upper bound on $\abs{\lambda'}$:
        \begin{align}\label{eq:upper-bound}
            \abs{\lambda'}\leq 2\left(\abs{C}/M_2 + \abs{C}/M_1)\right)\leq \abs{C}/U.
        \end{align}
    
        On the other hand $[a_1, y]_{\gamma}$ is a subsegment of $\lambda'$. Thus we have the following lower bound on $\abs{\lambda'}$:
        \begin{align} \label{eq:lower-bound}
            \abs{\lambda'} \geq \abs{C}/M_2\geq \abs{C}/L.
        \end{align} 
        Furthermore, using \eqref{eq:g-bound}, we see that
    \begin{align}\label{eq-distab}
        d(\lambda'^+, \lambda'^-) \leq d(a_2'', u_1')+d(u_1', u_1) + d(u_1, a_1')\leq D'+2D+3\delta+D\leq g(\abs{C}).
    \end{align}
    Thus, we have found a subsegment $\lambda'$ of $C$ satisfying \eqref{eq1} and \eqref{eq2}, as desired. 
    
    If, on the other hand, $d(y, u_1) \geq d(y, u_2)+2D$, then it must be the case that $u_2\neq x_2$ and hence $\abs{\gamma_2} \geq \abs{C}/{M_2}$. In that case an analogous argument shows that there exists a subsegment of $C$ satisfying \eqref{eq:upper-bound}, \eqref{eq:lower-bound} and \eqref{eq-distab} and hence satisfying \eqref{eq1} and \eqref{eq2}. This completes the proof of the claim.
    \hfill$\blacksquare$\\

    To complete the proof of the lemma, we apply   Claim~\ref{claim:recursion} to $\gamma=C$, which satisfies \ref{close-end} and \ref{long}.  If \ref{base-case} holds, then we have found our desired subpath $\lambda=\gamma'$.  If \ref{rec-path} holds, then we inductively apply Claim~\ref{claim:recursion} to the subpath $\gamma'$.  Since the length of $\gamma'$ is decreasing by at least 1 at each step, this process will terminate in finitely many steps, at which point we will have produced the desired subsegment $\lambda$ of $C$.
\end{proof}

\section{Construction of the groups}\label{sec:Construction}

Let $N, M, L, U, V\geq 36$ be integers such that $L\geq U$. Let $f$ be a function that is sublinear but superlogarithmic. Fix a $C'(4/N)$--group $G = \pres$ that does not satisfy the IPSC condition. 

\begin{remark}
    Such groups exist. For example, the groups constructed in \cite[Section~4]{Z:small_cancellation} do not satisfy the IPSC condition. They are $C'(1/7)$--groups, but it follows from the proofs in that paper that they are, in fact, $C'(4/N)$--groups.
\end{remark}

We now construct a new group $G'=\langle S' \mid R'\rangle$ from $G$ by carefully adding generators and relators. In Section~\ref{sec:SC&MLTG}, we show that $G'$ is a $C'(1/9)$--group\footnote{In fact, we show that $G'$ is a $C'(1/\lambda)$--group for some arbitrarily large $\lambda$, but we will not need this fact in this paper.} and a Morse local-to-global group. In Section~\ref{sec:main-theorem-proof}, we  show that $G'$ contains a Morse element that is not loxodromic in any action of $G'$ on a hyperbolic space, thereby proving Theorem~\ref{thm:mainthm}.

Let $T$ be a set of two formal variables that are distinct from those in $S$, and let $a$ be a formal variable which is distinct from any formal variables in $S\cup T$. Define $S' = S\cup T \cup \{a\}$. Let $ \mc T$ be the set of words over $T$, and write $\mc T = \{u_1, u_2, \ldots\}$ such that $\abs{u_i}\leq \abs{u_j}$ for all $i \leq j$.

Let $R_1\subset R$ be a set of relators of length at least $V$ and which contains at most one relator of each length. For each  $r\in R_1$, we denote by $W_r$ the set of cyclic subwords $w$ of $r$ whose length $\abs{w}$ satisfies 
\[
\abs{r}/L < \abs{w} < \abs{r}/U.
\]
Since there are at most $\abs{r}^2$ cyclic subwords of $r$, we have that $\abs{W_r}\leq \abs{r}^2$.

Iterate (increasing in length) over all relators $r\in R_1$. For each of those relators $r$, iterate over all words $w\in W_r$ and define a word  
\[
c_w := a^{f(\abs{r})}\Pi_{i=1}^M w u_{i_w +i},
\]
where $u_{i_w+1}, \ldots, u_{i_w +M}$ are the lowest indexed words in $\mc T$ which have not yet been used. Define 
\[
C_r = \{ c_w \mid  w\in W_r\}.
\]
Finally, define $C = \bigcup_{r\in R_1} C_r$ and $R'=R\cup C$. Let
\[
G' = \langle S' \mid R'\rangle.
\]

\subsection{Small cancellation and Morse local-to-global properties}
\label{sec:SC&MLTG}

 Our first goal is to show that $G'$ is a small cancellation group. To do this, we first show that the subwords $u_j$ in each relator $c_w$ have negligible length. Recall that by definition, each $r\in R_1$ has length at least $V$.

\begin{lem}\label{lem:length-bound-ui}
If $r\in R_1$ is a relator and $w\in W_r$, then $\abs{u_{i_w+i}}\leq \log_2(2M\abs{r}^3)+1$ for all $1\leq i \leq M$. Moreover, for $V$ large enough, we have $\abs{u_{i_w + i}}\leq \abs{c_w}/(2M)$ for all $1\leq i\leq M$.
\end{lem}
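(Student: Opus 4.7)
The plan is to bound the index $i_w + i$ polynomially in $\abs{r}$ and then translate this to a length bound using that $\mc T$ is enumerated in order of nondecreasing length. Both steps are straightforward; the only subtlety is a careful accounting of how many words of $\mc T$ have been used up by the time we process the pair $(r, w)$, and I do not anticipate any real obstacle.

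First I would observe that since relators in $R_1$ are processed in order of increasing length and $R_1$ contains at most one relator of each length, at most $\abs{r}$ relators in $R_1$ have been processed by the time we reach $r$. Each such relator $r'$ with $\abs{r'} \leq \abs{r}$ consumes $M\abs{W_{r'}} \leq M\abs{r'}^2 \leq M\abs{r}^2$ words from $\mc T$, giving $i_w + i \leq M\abs{r}^3$. Combining this with the elementary fact that $\abs{T} = 2$ implies $\abs{u_N} \leq \log_2(N+1)$ (since there are $2^{k+1}-1$ words over $T$ of length at most $k$, so enumerating by length puts $u_N$ at length at most $\lceil \log_2(N+1) \rceil$) yields
\[
\abs{u_{i_w+i}} \leq \log_2(M\abs{r}^3 + 1) \leq \log_2(2M\abs{r}^3) + 1,
\]
proving the first assertion.

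For the moreover clause, I would use that $c_w$ contains $M$ disjoint copies of $w$, so $\abs{c_w} \geq M\abs{w} \geq M\abs{r}/L$ by the defining property of $W_r$, giving $\abs{c_w}/(2M) \geq \abs{r}/(2L)$. Since $\log_2(2M\abs{r}^3) + 1$ is logarithmic in $\abs{r}$ while $\abs{r}/(2L)$ is linear, the former is dominated by the latter once $\abs{r}$ exceeds a threshold depending only on $M$ and $L$. Every $r \in R_1$ satisfies $\abs{r} \geq V$, so choosing $V$ large enough makes this threshold condition automatic, completing the proof.
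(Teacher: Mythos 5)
Your proof is correct and follows essentially the same route as the paper's: bound the index $i_w+i$ polynomially in $\abs{r}$ by counting words of $\mathcal T$ consumed, convert to a logarithmic length bound via the enumeration of $\mathcal T$ by length, and then compare against the linear lower bound $\abs{c_w}/(2M)\geq\abs{r}/(2L)$ for $V$ large. The only difference is cosmetic: you track $i_w+i\leq M\abs{r}^3$ directly, while the paper bounds $i_w\leq M\abs{r}^3$ and then uses $i_w+M\leq 2M\abs{r}^3$, yielding the same final estimate.
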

\begin{proof}
    Let $n = \abs{r}$. Recall that $R_1$ contains at most one relator for each length, and for each relator $r'\in R$ whose length is at most $n$ we have that $\abs{W_{r'}}\leq \abs{r'}^2\leq n^2$.  It follows that $i_w\leq Mn^3$ and hence $i_w+M\leq 2Mn^3$. Since the words in $\mc T$ are ordered by their length, we have that $\abs{u_{i}}\leq \log_2(i)+1$. Hence $\abs{u_{i_w +i}}\leq \log_2(2Mn^3)+1$, which concludes the first part of the statement. 
    
    For the second part, note that $\abs{c_w}\geq M\abs{r}/L = Mn/L$. Since these bounds do not depend on $V$ and $n/(2L)$ (which is a lower bound on $\abs{c_w}/(2M)$) grows faster than $\log_2(2Mn^3)+1$, we have $\abs{u_{i_w+i}}\leq \abs{c_w}/(2M)$ for large enough $V$. 
\end{proof}

\begin{lem}\label{lem:small-cancellation-condition-satisfied}
    Let $\lambda = \max\{N, M, U\}/4$. If $V$ is large enough, then the group $G' = \langle S' \mid R'\rangle$ is a $C'(1/\lambda)$--group. In particular, $G'$ is a $C'(1/9)$--group.
\end{lem}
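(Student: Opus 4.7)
The plan is to classify pieces of $\overline{R'}$ according to which pair of distinct relators they are common to, and use two key structural features to control each case. The first is the partition of the alphabet $S'$ into three pairwise disjoint pieces: $S$, $T$, and $\{a\}$ (together with their inverses). The second is the cyclic block decomposition
\[
c_w = a^{f(|r|)}(w\, u_{i_w+1})(w\, u_{i_w+2})\cdots(w\, u_{i_w+M}),
\]
where the $u_j$-indices are chosen disjointly across distinct $c$-relators by construction, so no word $u_j$ appears in two different relators of $C$. By Lemma~\ref{lem:length-bound-ui}, each $u_j$-block has length at most $|c_w|/(2M)$ when $V$ is large, while the leading $a$-block has length $f(|r|)$, which is sublinear in $|r|$.

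The analysis splits into three cases. First, for a piece $p$ common to two relators of $\overline{R}$, the $C'(4/N)$-condition of $G$ directly gives $|p| < 4|r|/N$, which yields the desired bound. Second, for $p$ common to $r \in \overline{R}$ and $c_w \in \overline{C}$, the fact that $r$ consists only of letters in $\overline{S}$ forces $p$ to avoid letters in $\{a, a^{-1}\} \cup \overline{T}$, so $p$ is confined to a single $w$-block of $c_w$. Thus $|p| \leq |w| < |r|/U$ relative to $r$, and $|p|/|c_w| < 1/M + o(1)$ relative to $c_w$ since $|c_w| \geq M|w|$.

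The delicate case is when $p$ is common to two distinct $c$-relators $c_w$ and $c_{w'}$. Here the alphabet partition forces $p$ to traverse the same block-type pattern in both cyclic shifts. The uniqueness of the $u_j$-indices implies that no $u_j$-block appears in two different $c$-relators, so if $p$ contains any letter of $\overline{T}$, it must be localized to a short window around a single $u_j$-block of total length at most $f(|r|) + 2|w| + |u_j| = O(|r|/U) + o(|r|)$. If $p$ contains no letter of $\overline{T}$ but includes some $a$, then $p$ lies in at most one $a$-block followed by at most one $w$-block, again of length at most $f(|r|) + |w|$; and if $p$ consists only of $a$'s, then $|p| \leq f(|r|)$, which is already sublinear. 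In every sub-case, dividing by $|c_w| \geq M|r|/L$ yields a piece-to-relator ratio of order $L/(MU) + o(1)$, and choosing $V$ large makes the lower-order terms negligible, so the ratio becomes strictly less than $1/\lambda$.

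The main obstacle will be the careful cyclic-shift bookkeeping in the last case: pieces may begin and end at arbitrary positions within blocks, and one must verify that block-matching between two cyclic shifts cannot extend past a $u_j$-boundary (ruled out by uniqueness) or include more than one $a$-block (there is only one per $c$-relator). Combined with the quantitative estimates from Lemma~\ref{lem:length-bound-ui} and the sublinearity of $f$, this yields the claimed $C'(1/\lambda)$-condition, and in particular the $C'(1/9)$-condition since $\lambda \geq 9$.
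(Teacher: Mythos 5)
Your proposal follows essentially the same three-way case analysis as the paper's proof and relies on the same key structural observations: the disjointness of the alphabets $S$, $T$, and $\{a\}$, the uniqueness of each $u_j$ across the $C$-relators, and the block decomposition of $c_w$. The $(R,R)$ and $(C,C)$ cases match the paper's argument, modulo slightly looser bookkeeping in your $(C,C)$ bounds, which still close with $V$ large.

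There is, however, a gap in your $(R,C)$ case. You write that $p$ is confined to a single $w$-block so ``$|p| \leq |w| < |r|/U$ relative to $r$,'' using the single symbol $r$ both for the relator from $\overline{R}$ sharing the piece with $c_w$ and for the base relator in $R_1$ from which $c_w$ was built. The inequality $|w| < |r|/U$ comes from the definition of $W_r$ and holds only when $r$ is that base relator. If instead $p \subseteq w$ is a piece shared with a \emph{different} relator $r_2 \in \overline{R}$ (i.e., $\overline{r_2} \neq \overline{r}$), then you have no bound on $|p|/|r_2|$: here one must observe that $w$ is then a piece with respect to the original relator set $R$, so the $C'(4/N)$ condition of $G$ gives $|p| \leq |w| < 4|r_2|/N$. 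The paper splits this case explicitly into $\overline{r} = \overline{r_2}$ and $\overline{r} \neq \overline{r_2}$; adding that distinction closes the gap and the rest of your argument goes through.
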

\begin{proof}
Let $p$ be a subsegment of two relators $r_1$ and $r_2$. If $r_1, r_2\in R$, then $\abs{p} <\frac{4\abs{r_1}}{N}$ since $G = \pres$ is a $C'(4/N)$--group. By symmetry, it remains to show that $\abs{p}< 4\abs{r_1}/(\max\{N, M, U\})$ and $\abs{p}< 4\abs{r_2}/(\max\{N, M, U\})$ in the case where $r_1\in C$. 

If $r_1 \in C$, then
\[
r_1 = a^k\Pi_{i=1}^M w u_{i_w +i}
\]
for some relator $r\in R_1$, cyclic subword $w\in W_r$ and $k = f(\abs{r})$. 

\textbf{Case 1:} $r_2\in R$. In this case, $p$ has to be a subword of $w$ or its inverse. In particular, $\abs{p}\leq \abs{w} < \abs{r_1}/M$. If $\ov{r} = \ov{r_2}$, then we know that $\abs{w}\leq \abs{r}/U < 4\abs{r_2}/U$. If $\ov{r}\neq \ov{r_2}$, then $w$ is a piece with respect to the original set of relators $R$. Thus $\abs{w} < 4\abs{r_2}/N$, since $G=\langle S \mid R \rangle$ is a $C'(4/N)$--group.

\textbf{Case 2:} $r_2\in C$. In this case we can write 
\[
r_2 = a^l\Pi_{i=1}^M v u_{i_v +i}
\]
for some relator $r'\in R_1$, cyclic subword $v\in W_{r'}$ and $l = f(\abs{r'})$. Since $u_i\neq u_j$ for all $i\neq j$ and $T$ is disjoint from $S$ and $\{a\}$, $p$ can intersect at most one copy of $w$. In particular, $\abs{p}\leq \abs{u_{j_1}} + \abs{u_{j_2}} + \abs{w} + k$. For large enough $V$, Lemma~\ref{lem:length-bound-ui} shows that $\abs{u_{j_1}}+\abs{u_{j_2}}\leq \abs{r_1}/M$. Since $f$ is a sublinear function, if $V$ is large enough, we have that $k < \abs{r_1}/M$. In particular, $\abs{p} < 2\abs{r_1}/M + \abs{w} < 4\abs{r_1}/M$. Analogously, $\abs{p} < 4 \abs{r_2}/M$.
\end{proof}

Our next goal is to show that $G'$ is a MLTG group.

\begin{lem}\label{lem:satisfies-mltg}
    For large enough $V$, the group $G' = \langle S' \mid R'\rangle$ satisfies the MLTG property.
\end{lem}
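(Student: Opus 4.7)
The plan is to apply Lemma~\ref{lem:equivalence:ipsc-sigma-compact-mltg}, which characterizes MLTG among $C'(1/9)$--groups as exactly those failing the IPSC condition. Since Lemma~\ref{lem:small-cancellation-condition-satisfied} already guarantees that $G'$ is a $C'(1/9)$--group for $V$ large enough, it suffices to show that $G'$ fails IPSC. For this I would invoke Lemma~\ref{lem:combination-ipsc} with base group $G$ (which fails IPSC by hypothesis), additional generators $T \cup \{a\}$, and additional relators $C$.

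The main technical step is to verify the hypotheses of Lemma~\ref{lem:combination-ipsc} by exhibiting an appropriate decomposition $r' = \prod_{i=1}^{k} u_i v_i$ for each $r' \in C$. Given $c_w \in C$ arising from $w \in W_r$ with $r \in R_1$, I would pass to the cyclic shift that begins with the first copy of $w$ and take $k = M$, $u_i = w$ for $1 \leq i \leq M$, $v_i = u_{i_w + i}$ for $1 \leq i \leq M-1$, and $v_M = u_{i_w + M}\, a^{f(\abs{r})}$. Condition~\ref{prop:a} is immediate since $w$ is a cyclic subword of $r \in R$. Condition~\ref{prop:b} holds with $B \geq L$ because $\abs{w} > \abs{r}/L$ by the definition of $W_r$. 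For condition~\ref{prop:similar-size}, I use that $\abs{c_w}$ is linearly comparable to $\abs{r}$: the dominant contribution $M\abs{w}$ lies in the interval $[M\abs{r}/L,\, M\abs{r}/U]$, while the remaining terms $f(\abs{r})$ and the $\abs{u_{i_w + i}}$ are sublinear in $\abs{r}$ by the sublinearity of $f$ together with Lemma~\ref{lem:length-bound-ui}. Thus, for $V$ large enough, the ratio $\abs{c_w}/\abs{r}$ is bounded above and below by positive constants independent of $r$, and some integer $B$ works uniformly. Condition~\ref{prop:vi_is_subl} follows because each $\abs{v_i} \leq \abs{u_{i_w+i}} + f(\abs{r})$, which by Lemma~\ref{lem:length-bound-ui} is at most $\log_2(2M\abs{r}^3) + 1 + f(\abs{r})$; rewriting this as a function of $\abs{c_w}$ via the linear comparability produces a sublinear bound $\rho(\abs{c_w})$.

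With these hypotheses in place, Lemma~\ref{lem:combination-ipsc} yields that $G' = \langle S \cup (T \cup \{a\}) \mid R \cup C \rangle$ does not satisfy the IPSC condition, and combining this with the $C'(1/9)$ property from Lemma~\ref{lem:small-cancellation-condition-satisfied} and Lemma~\ref{lem:equivalence:ipsc-sigma-compact-mltg} gives the desired MLTG property. The only real obstacle I anticipate is the bookkeeping for condition~\ref{prop:similar-size}: one must carefully confirm that the asymptotically negligible prefix $a^{f(\abs{r})}$ and the short subwords $u_{i_w+i}$ do not upset the linear comparability of $\abs{c_w}$ and $\abs{r}$. However, once $V$ is taken sufficiently large relative to $L$, $M$, $U$, and the growth rate of $f$, this reduces to a straightforward bounded-ratio check, so no conceptual difficulty arises.
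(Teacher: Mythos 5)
Your proposal is correct and follows essentially the same route as the paper: apply Lemma~\ref{lem:small-cancellation-condition-satisfied} to get the $C'(1/9)$ condition, verify the hypotheses of Lemma~\ref{lem:combination-ipsc} to conclude that $G'$ fails IPSC, and finish with Lemma~\ref{lem:equivalence:ipsc-sigma-compact-mltg}. The only difference is that you spell out the decomposition $r' = \prod u_i v_i$ explicitly where the paper treats conditions \ref{prop:a}, \ref{prop:b}, \ref{prop:vi_is_subl} as immediate ``by construction'' and only writes out the computation for \ref{prop:similar-size}.
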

\begin{proof}
    Since $N, M, U\geq 36$, if  $V$ is large enough, then $G'$ is a $C'(1/9)$--group by Lemma~\ref{lem:small-cancellation-condition-satisfied}. Recall  that the original group $G$ does not satisfy the IPSC condition. Recalling that relators are cyclic words, the group $G'$ satisfies all conditions of Lemma~\ref{lem:combination-ipsc} by construction: \ref{prop:a}, \ref{prop:b} and \ref{prop:vi_is_subl} are straightforward. To see that \ref{prop:similar-size} holds, we observe the following. Since $f$ and $\log_2$ are sublinear functions, we have for large enough $V$ that
    \begin{align*}
        \abs{c_w} & = f(\abs{r}) + M\abs{w} + \sum_{i=1}^M \abs{u_{i_w+i}} \leq f(\abs{r}) + M\abs{r}/U + M\log_2(2M\abs{r}^3) + M\leq 2M\abs{r}/U. 
    \end{align*}
    Therefore, Lemma~\ref{lem:combination-ipsc} implies that $G'$ does not satisfy the IPSC condition. By Lemma~\ref{lem:equivalence:ipsc-sigma-compact-mltg} not satisfying the IPSC condition is equivalent to satisfying the MLTG property, which concludes the proof.
\end{proof}

\subsection{Proof of Main Theorem}\label{sec:main-theorem-proof}

We now show that the element $a\in G'$ is Morse but not loxodromic in any action of $G'$ on a hyperbolic 
space. To show that $a$ is Morse, we use Lemma~\ref{lem:4.14}. To show that $a$ cannot be loxodromic in any action of $G'$ on a hyperbolic space, we use that by Lemma~\ref{lem:shortsubpaths} any relator $r\in R_1$ has a cyclic subword $w\in W_r$ whose endpoints are close to each other. The existence of the relator $r_w$ then implies that the endpoints of $a^{f(\abs{r})}$ have to be too close.

\begin{lem}\label{lemma:a-is-morse}
    If $V$ is large enough, then the element $a\in \presu$ is Morse.
\end{lem}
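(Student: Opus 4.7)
The plan is to apply \Cref{lem:4.14} to the path $\gamma_n$ in $\Cay(G', S')$ labelled by $a^n$: first I will verify that $\gamma_n$ is a geodesic via \Cref{lemma:geodesic_condition}, and then deduce Morseness from sublinearity of its intersection function $\rho$. Since the bounds obtained are independent of $n$, this shows that $\{a^i\}_{i\in\Z}$ is a Morse quasi-geodesic, i.e., that $a$ is Morse.

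To bound $\rho$, I first observe that relators in $R$ are words over $\ov S$ and therefore share no letters with $\gamma_n$; so only relators coming from $C$ contribute. A relator $c_w = a^{f(\abs{r})}\prod_{i=1}^M w u_{i_w+i} \in C$ consists of a single block of $a$'s of length exactly $f(\abs{r})$ followed by a word over $\ov{S\cup T}$, since neither $w$ (a subword of $r \in R_1$) nor the $u_{i_w+i}$ (words over $T$) contain $a$. Because $a \notin S\cup T$, the first letter of $w$ and the last letter of $u_{i_w+M}$ bookend the $a$-block cyclically, so no cyclic rotation of $c_w^{\pm 1}$ produces a longer contiguous run of $a$'s. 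Hence any common subword of $\gamma_n$ and a cyclic shift of $c_w^{\pm 1}$ has length at most $f(\abs{r})$. Combined with the bound $\abs{c_w} \geq M\abs{w} > M\abs{r}/L$, a contributing relator of length at most $t$ satisfies $\abs{r} < Lt/M$, so $\rho(t) \leq f(Lt/M)$ for $t \geq MV/L$, and $\rho(t) = 0$ for smaller $t$ (every relator in $C$ comes from an $r\in R_1$ with $\abs{r}\geq V$).

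Since $f$ is sublinear, so is $t\mapsto f(Lt/M)$, and for $V$ large enough one has $f(Lt/M) \leq t/3$ throughout the regime $t \geq MV/L$. Hence $\rho(t) \leq t/3$ for all $t \geq 0$, and \Cref{lemma:geodesic_condition} implies that $\gamma_n$ is a geodesic. Applying \Cref{lem:4.14} together with sublinearity of the bound $t\mapsto f(Lt/M)$ then yields that $\gamma_n$ is Morse, with a Morse gauge that does not depend on $n$. This completes the argument.

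The main technical point I anticipate is the cyclic-subword analysis: one must confirm carefully that in each $c_w$ and each cyclic shift of its inverse, the block of $a$'s has length exactly $f(\abs{r})$ and is flanked on both sides by letters outside $\{a, a^{-1}\}$, so that no longer power of $a$ can be read off cyclically. All remaining ingredients -- the size estimates on the $u_i$'s from \Cref{lem:length-bound-ui}, the definitional inequality $\abs{w} > \abs{r}/L$, and the sublinearity of $f$ -- are already in place from \Cref{sec:SC&MLTG}.
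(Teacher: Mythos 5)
Your proof follows essentially the same strategy as the paper: bound the intersection function of a power of $a$ by the sublinear function $f$ (using that $a$ does not appear in any $w$ or $u_i$, so the maximal $a$-block in $c_w$ has length exactly $f(\abs{r})$), deduce that it is a geodesic via Lemma~\ref{lemma:geodesic_condition}, and then invoke Lemma~\ref{lem:4.14}. The only cosmetic difference is that you work with finite paths $\gamma_n$ and appeal to uniformity of the Morse gauge, whereas the paper applies the two lemmas directly to the bi-infinite path labelled by $a^\infty$, which sidesteps that extra step; your explicit bound $\rho(t)\leq f(Lt/M)$ is also slightly more careful than the paper's shorthand, since it correctly tracks that the relevant quantity is $f(\abs{r})$ rather than $f(\abs{c_w})$.
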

\begin{proof}
    Lemma~\ref{lem:small-cancellation-condition-satisfied} shows that for $V$ large enough,  $G' = \presu$ is a $C'(1/9)$--group. 
    
    We consider the intersection function $\rho$ of the infinite path $\gamma$ labeled by $a^\infty$ in $\textrm{Cay}(G', S\cup S')$. 
    By construction, the length of the maximal subword of $\gamma$ and a relator $r'\in R'$ is $f(\abs{r'})$, while $\gamma$ has no common subword with any $r\in R$.  Since $f$ is sublinear, the intersection function $\rho$ must be as well.  By choosing $V$ large enough, we can ensure that $\rho(t) \leq t/3$.  By Lemma~\ref{lemma:geodesic_condition}, $\gamma$ is a geodesic, and by Lemma~\ref{lem:4.14}, $a$ is Morse.
\end{proof}

We next show that the element $a\in G'$  cannot  be loxodromic in any action of $G'$ on a hyperbolic space.
\begin{lem}\label{lem:a-not-lox}
    If $V$ is large enough, then the element $a\in G'$ is not loxodromic in any action of $G'$ on a hyperbolic space.
\end{lem}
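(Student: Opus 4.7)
The plan is to argue by contradiction: suppose $G'$ acts by isometries on a $\delta$-hyperbolic space $X$ with basepoint $x_0 \in X$ and that $a$ is loxodromic, so there exist $\tau > 0$ and $C \geq 0$ with $\dist(x_0, a^n x_0) \geq \tau n - C$ for all $n \in \N$. Let $K := \max\{\dist(x_0, s x_0) \mid s \in S' \cup (S')^{-1}\}$, so the orbit map $G' \to X$ is $K$-Lipschitz. I will show that for every sufficiently long relator $r \in R_1$, some relator $c_w$ from the construction forces $\dist(x_0, a^{f(|r|)} x_0)$ to be far smaller than $\tau f(|r|) - C$. Since the base group $G$ may be assumed to have relators of arbitrarily large length (see \cite[Section~4]{Z:small_cancellation}), this gives the required contradiction.

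The first step is to produce the short cyclic subword $w$ of $r$ via Lemma~\ref{lem:shortsubpaths}. I would choose the construction's $f$ generous enough, say $f(n) = \sqrt n$, and set the construction's $L$ slightly larger than the lemma's value $L=32U$. For each action, pick a sublinear superlogarithmic $g$ with $g(Kn) = o(f(n))$, for instance $g(n) = (\log n)^2$. For $r = s_1 \cdots s_n \in R_1$, the Cayley-graph loop $P_r \colon [0,n] \to X$ defined by $P_r(i) := s_1 \cdots s_i \cdot x_0$ (geodesic on each $[i, i+1]$) is $K$-Lipschitz, so the rescaling $Q_r(t) := P_r(t/K)$ is a 1-Lipschitz embedded cycle of length $Kn$. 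For $n$ with $Kn \geq N(\delta, U, L)$, Lemma~\ref{lem:shortsubpaths} produces a subsegment $\lambda$ of $Q_r$ of parameter length in $[Kn/L, Kn/U]$ with $\dist(\lambda^-, \lambda^+) \leq g(Kn)$. Rounding the endpoints to the nearest vertices $P_r(j), P_r(k)$ (at cost $\leq K$ per side) yields integers $j < k$ with $k - j \in (n/L, n/U)$, so that $w := s_{j+1} \cdots s_k$ is a cyclic subword of $r$ lying in $W_r$; by isometry of the action,
\[
\dist(x_0, w x_0) = \dist(P_r(j), P_r(k)) \leq g(Kn) + 2K.
\]

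The second step uses the relator $c_w = a^{f(n)} \prod_{i=1}^M (w u_{i_w+i})$, which is trivial in $G'$, so $a^{f(n)} = h^{-1}$ for $h := \prod_{i=1}^M (w u_{i_w + i})$. Telescoping along the $M$ factors of $h$, each displacement equals $\dist(x_0, (w u_{i_w+i}) x_0) \leq \dist(x_0, w x_0) + K |u_{i_w + i}|$ by isometry and $K$-Lipschitzness, so
\[
\dist(x_0, a^{f(n)} x_0) = \dist(x_0, h x_0) \leq M\bigl(g(Kn) + 2K\bigr) + K \sum_{i=1}^{M} |u_{i_w+i}| \leq M g(Kn) + O(\log n),
\]
where the last bound uses Lemma~\ref{lem:length-bound-ui}. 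Combined with the loxodromic lower bound $\dist(x_0, a^{f(n)} x_0) \geq \tau f(n) - C$, this yields $\tau f(n) - C \leq M g(Kn) + O(\log n)$, contradicting our choices (since $f$ is superlogarithmic and $g(Kn) = o(f(n))$) for $n$ sufficiently large in $R_1$.

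The main obstacle is coordination of quantifiers: the construction parameters $L, U, V, f$ are fixed once for all future actions, while the key comparison $g(Kn) = o(f(n))$ involves the action-dependent $K$. The resolution is that the lemma's $L$ depends only on $U$ (the proof shows $L=32U$), so we fix the construction's $L$ once as slightly larger than $32U$; and by choosing $f$ in the construction to grow appreciably faster than polylog, a suitable $g$ can be selected for each action. A secondary technical point is to verify that after rounding, $|w|$ lies strictly inside $(n/L, n/U)$ so that $w \in W_r$, which is ensured for large $n$ by the slack in the construction's $L$.
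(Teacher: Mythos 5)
Your proposal is essentially the same argument as the paper's: apply Lemma~\ref{lem:shortsubpaths} to the image of a long relator $r\in R_1$, obtain a cyclic subword $w\in W_r$ with nearby endpoints, then use the relator $c_w$ to bound $d_X(x_0, a^{f(|r|)}x_0)$ by $M$ times a superlogarithmic quantity plus the $u_j$-contributions, and conclude sublinearity from the superlogarithmic growth of $f$. The only structural difference is in bookkeeping, and there you are actually more careful than the paper. The paper writes ``we view $r$ as an embedded cycle in $X$'' and applies Lemma~\ref{lem:shortsubpaths} to get a bound $d(w^-,w^+)\le g(|r|)$, silently treating the orbit path as a $1$--Lipschitz cycle of length $|r|$; your rescaling $Q_r(t) = P_r(t/K)$ makes the cycle genuinely $1$--Lipschitz of length $K|r|$, and you correctly track the $K$-dependence to arrive at $g(K|r|)+2K$, choosing $g$ after the action (hence after $K$) is fixed so that $g(Kn)=o(f(n))$. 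You also explicitly handle the rounding to integer word-length and the strictness of the inequalities defining $W_r$ by taking the construction's $L$ slightly larger than the lemma's $32U$; the paper's phrasing ``$|r|/L\le|w|\le|r|/U$ ... in particular $w\in W_r$'' elides this since $W_r$ requires strict inequalities. One small misstep in your framing: you say you would ``choose'' $f=\sqrt{n}$, but $f$ is a parameter of the construction fixed before the lemma, and the lemma must hold for any sublinear superlogarithmic $f$; fortunately your subsequent reasoning (picking $g$ slower than $f$ once $K$ is known) works for any such $f$, so this is only a matter of phrasing. Overall the proposal is correct and, if anything, tightens the paper's argument rather than diverging from it.
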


\begin{proof}
    Let $G'\curvearrowright X$ be an isometric action on a $\delta$--hyperbolic space. We will show that $d_X(x_0, a^nx_0)$ grows sublinearly in $n$, which will imply that the element $a\in G'$ cannot be loxodromic. Let $g$ be a superlogarithmic function which grows slower than $f$, that is, $\lim_{x\to\infty} f(x)/g(x) = \infty$. For example, one could take $g(x) = \log\left(f(x)/\log(x)\right) \log(x)$.

    Let $N_0$ be the constant $N$ from Lemma~\ref{lem:shortsubpaths}, and $n \geq N_0$. Choose $r\in R_1$ such that $f(\abs{r})>n$ and $\abs{r}> N_0$. We view $r$ as an embedded cycle in $X$. By Lemma~\ref{lem:shortsubpaths}, there is a cyclic subword $w$ of $r$ satisfying $\abs{r}/L \leq \abs{w}\leq \abs{r}/U$ and $d_X(w^-,w^+)\leq g(\abs{r})$. In particular, $w\in W_r$.  Consider the relator $c_w\in C_r\subset C$, which we also view as an embedded cycle in $X$. This cycle has a subpath labeled by $a^{f(\abs{r})}$. However, by Lemma~\ref{lem:length-bound-ui} and the triangle inequality, then endpoints of $a^{f(\abs{r}})$ have distance at most     
    \begin{align*}
        Md_X(w^- w^+) + \sum_{i=0}^M d_{X}(u_{i_w + i}^-, u_{i_w+i}^+)\leq Mg(\abs{r}) + M\log_2(2M\abs{r}^3) + M =: A(\abs{r}).
    \end{align*}
    Since $f$ is superlogarithmic and grows faster than $g$, $A(x)$ grows slower than $f$.  Therefore,  $d_X(x_0, a^nx_0)$ grows sublinearly,  concluding the proof. 
\end{proof}

\begin{proof}[Proof of Theorem~\ref{thm:mainthm}]
    This follows immediately from Lemmas~\ref{lem:small-cancellation-condition-satisfied}, \ref{lem:satisfies-mltg}, \ref{lemma:a-is-morse}, and \ref{lem:a-not-lox}.
\end{proof}

\bibliography{mybib}

\newcommand{\etalchar}[1]{$^{#1}$}
\begin{thebibliography}{BFFGS22}

\bibitem[ACGH17]{ACHG:contraction_morse_divergence}
Goulnara~N Arzhantseva, Christopher~H Cashen, Dominik Gruber, and David Hume.
\newblock Characterizations of morse quasi-geodesics via superlinear divergence and sublinear contraction.
\newblock {\em Documenta Mathematica}, 22:1193--1224, 2017.

\bibitem[ACGH19]{arzhantseva2019negative}
Goulnara~N Arzhantseva, Christopher~H Cashen, Dominik Gruber, and David Hume.
\newblock Negative curvature in graphical small cancellation groups.
\newblock {\em Groups, Geometry, and Dynamics}, 13(2):579--632, 2019.

\bibitem[AD19]{ArzhDrutu:SC}
Goulnara Arzhantseva and Cornelia Dru\c{t}u.
\newblock Geometry of infinitely presented small cancellation groups and quasi-homomorphisms.
\newblock {\em Canad. J. Math.}, 71(5):997--1018, 2019.

\bibitem[AH20]{AbbottHume:gen_lox}
Carolyn~R. Abbott and David Hume.
\newblock The geometry of generalized loxodromic elements.
\newblock {\em Ann. Inst. Fourier (Grenoble)}, 70(4):1689--1713, 2020.

\bibitem[AH21]{AbbottHume:hyp_actions}
Carolyn~R. Abbott and David Hume.
\newblock Actions of small cancellation groups on hyperbolic spaces.
\newblock {\em Geom. Dedicata}, 212:325--363, 2021.

\bibitem[BF02]{BestFuji:WPD}
Mladen Bestvina and Koji Fujiwara.
\newblock Bounded cohomology of subgroups of mapping class groups.
\newblock {\em Geom. Topol.}, 6:69--89, 2002.

\bibitem[BFFGS22]{BalFFGenSisto}
Sahana Balasubramanya, Francesco Fournier-Facio, Anthony Genevois, and Alessandro Sisto.
\newblock Property {(NL)} for group actions on hyperbolic spaces.
\newblock {\em arXiv preprint arXiv:2212.14292}, 2022.

\bibitem[BH99]{BH}
Martin~R. Bridson and Andr\'{e} Haefliger.
\newblock {\em Metric spaces of non-positive curvature}, volume 319 of {\em Grundlehren der mathematischen Wissenschaften [Fundamental Principles of Mathematical Sciences]}.
\newblock Springer-Verlag, Berlin, 1999.

\bibitem[Bow12]{Bow:rel-hyp}
B.~H. Bowditch.
\newblock Relatively hyperbolic groups.
\newblock {\em Internat. J. Algebra Comput.}, 22(3):1250016, 66, 2012.

\bibitem[DG11]{DahGuirardel}
Fran\c~cois Dahmani and Vincent Guirardel.
\newblock The isomorphism problem for all hyperbolic groups.
\newblock {\em Geom. Funct. Anal.}, 21(2):223--300, 2011.

\bibitem[DGO17]{DGO:rotating_families}
Fran{\c{c}}ois Dahmani, Vincent Guirardel, and Denis Osin.
\newblock {\em Hyperbolically embedded subgroups and rotating families in groups acting on hyperbolic spaces}, volume 245.
\newblock American Mathematical Society, 2017.

\bibitem[ECH{\etalchar{+}}92]{ECHLPT}
David B.~A. Epstein, James~W. Cannon, Derek~F. Holt, Silvio V.~F. Levy, Michael~S. Paterson, and William~P. Thurston.
\newblock {\em Word processing in groups}.
\newblock Jones and Bartlett Publishers, Boston, MA, 1992.

\bibitem[Far98]{Farb:rel_hyp}
B.~Farb.
\newblock Relatively hyperbolic groups.
\newblock {\em Geom. Funct. Anal.}, 8(5):810--840, 1998.

\bibitem[Fin17]{fink2017morse}
Elisabeth Fink.
\newblock Morse geodesics in torsion groups.
\newblock {\em arXiv preprint arXiv:1710.11191}, 2017.

\bibitem[GH21]{GH:acylindrical}
Anthony Genevois and Camille Horbez.
\newblock Acylindrical hyperbolicity of automorphism groups of infinitely ended groups.
\newblock {\em Journal of Topology}, 14(3):963--991, 2021.

\bibitem[Gro87]{Gromov-hyp}
Misha Gromov.
\newblock Hyperbolic groups.
\newblock In {\em Essays in group theory}, volume~8 of {\em Math. Sci. Res. Inst. Publ.}, pages 75--263. Springer, New York, 1987.

\bibitem[Gru15]{Gruber:SQ}
Dominik Gruber.
\newblock Infinitely presented {$C(6)$}-groups are {SQ}-universal.
\newblock {\em J. Lond. Math. Soc. (2)}, 92(1):178--201, 2015.

\bibitem[HSZ24]{HeSprianoZbinden:sigma}
Vivian He, Davide Spriano, and Stefanie Zbinden.
\newblock Sigma-compactness of morse boundaries in morse local-to-global groups and applications to stationary measures.
\newblock {\em arXiv preprint arXiv:2407.18863}, 2024.

\bibitem[LSLS77]{lyndon1977combinatorial}
Roger~C Lyndon, Paul~E Schupp, RC~Lyndon, and PE~Schupp.
\newblock {\em Combinatorial group theory}, volume 188.
\newblock Springer, 1977.

\bibitem[OOS09]{OlshankiiOsinSapir:lacunary}
Alexander~Yu. Olshanskii, Denis~V. Osin, and Mark~V. Sapir.
\newblock Lacunary hyperbolic groups.
\newblock {\em Geom. Topol.}, 13(4):2051--2140, 2009.
\newblock With an appendix by Michael Kapovich and Bruce Kleiner.

\bibitem[Osi06]{Osin:rel_hyp}
Denis~V. Osin.
\newblock Relatively hyperbolic groups: intrinsic geometry, algebraic properties, and algorithmic problems.
\newblock {\em Mem. Amer. Math. Soc.}, 179(843):vi+100, 2006.

\bibitem[Osi16]{O:acylindrical}
Denis Osin.
\newblock Acylindrically hyperbolic groups.
\newblock {\em Transactions of the American Mathematical Society}, 368(2):851--888, 2016.

\bibitem[Pau91]{Paulin}
Fr\'ed\'eric Paulin.
\newblock Outer automorphisms of hyperbolic groups and small actions on {${\bf R}$}-trees.
\newblock In {\em Arboreal group theory ({B}erkeley, {CA}, 1988)}, volume~19 of {\em Math. Sci. Res. Inst. Publ.}, pages 331--343. Springer, New York, 1991.

\bibitem[RST22]{russellsprianotran:thelocal}
Jacob Russell, Davide Spriano, and Hung~Cong Tran.
\newblock The local-to-global property for {M}orse quasi-geodesics.
\newblock {\em Math. Z.}, 300(2):1557--1602, 2022.

\bibitem[Sel95]{Sela}
Z.~Sela.
\newblock The isomorphism problem for hyperbolic groups. {I}.
\newblock {\em Ann. of Math. (2)}, 141(2):217--283, 1995.

\bibitem[Sis16]{S:hypemb}
Alessandro Sisto.
\newblock Quasi-convexity of hyperbolically embedded subgroups.
\newblock {\em Math. Z.}, 283(3-4):649--658, 2016.

\bibitem[Zbi23]{Z:small_cancellation}
Stefanie Zbinden.
\newblock Small cancellation groups with and without sigma-compact morse boundary.
\newblock {\em arXiv preprint arXiv:2307.13325}, 2023.

\end{thebibliography}
\bibliographystyle{alpha}
\end{document}